\DeclareSymbolFont{script}{U}{eus}{m}{n}
\DeclareMathSymbol{\Wedge}{0}{script}{"5E}
\DeclareMathAlphabet{\mathrmsl}{OT1}{cmr}{m}{sl}
\newtheorem{theorem}{Theorem}
\newtheorem{lem}[theorem]{Lemma}
\newtheorem{prop}[theorem]{Proposition}
\theoremstyle{definition}
\newtheorem{dfn}{Definition}
\theoremstyle{remark}
\newtheorem{rk}{Remark}
\newtheorem{ex}{Example}
\renewcommand\a{\alpha}
\renewcommand\b{\beta}
\newcommand\cg{\gamma}
\renewcommand\d{\delta}
\renewcommand\l{\lambda}
\newcommand\te{\theta}
\newcommand\vp{\varphi}
\newcommand\x{\xi}
\newcommand\z{\sigma}
\newcommand\Cc{{\mathscr C}}
\newcommand\C{{\mathbb C}}
\newcommand\N{{\mathbb N}}
\newcommand\PP{{\mathbb P}}
\newcommand\R{{\mathbb R}}
\newcommand\E{{\mathcal E}}
\newcommand\cL{{\mathcal L}}
\newcommand\I{{\mathcal I}}
\newcommand\V{{\mathcal V}}
\newcommand\W{{\mathcal W}}
\newcommand\bx{{\boldsymbol x}}
\newcommand\bu{{\boldsymbol u}}
\newcommand\bF{{\boldsymbol F}}
\newcommand\dd{{\mathrmsl d}}
\renewcommand{\geq}{\geqslant}
\renewcommand{\leq}{\leqslant}
\newcommand\sub{\subseteq}
\newcommand\ot{\otimes}
\newcommand\we{\wedge}
\newcommand\p{\partial}
\newcommand{\tw}{{\mathcal T\!w}}
\newcommand{\chv}{\chi}
\newcommand{\cch}{{\mathcal C}}
\newcommand{\ccov}{{\mathcal Q}}
\newcommand{\normeq}{{\mathfrak e}}
\newcommand{\Id}{\mathrmsl{Id}}
\newcommand\op[1]{\mathop{\mathrm{#1}}\nolimits}
\newcommand\com[1]{}
\newcommand\inm{\mathbin{\!\lrcorner}}
\newcounter{numl}
\newcommand{\labelnuml}{\textup{(\roman{numl})}}
\newenvironment{numlist}{\begin{list}{\labelnuml}%
{\usecounter{numl}\setlength{\leftmargin}{0pt}%
\setlength{\itemindent}{2\parindent}%
\setlength{\itemsep}{\smallskipamount}\def
\makelabel ##1{\hss \llap {\upshape ##1}}}}{\end{list}}
\begin{document}

\title[Integrability via geometry]{Integrability via geometry: dispersionless
differential equations in three and four dimensions}
\author{David M.J. Calderbank}
\address{DC: Department of Mathematical Sciences, University of Bath,
Claverton Down, Bath, BA2 7AY, UK. \quad
\emph{Email:} D.M.J.Calderbank@bath.ac.uk}
\author{Boris Kruglikov}
\address{BK: Institute of Mathematics and Statistics, UiT the Arctic University of Norway,
Troms\o\ 90-37, Norway. \quad \emph{Email:} Boris.Kruglikov@uit.no}
\date{}
\keywords{Integrable system, dispersionless Lax pair, characteristic variety,
Einstein--Weyl geometry, self-duality, twistor theory}
\begin{abstract}
We prove that the existence of a dispersionless Lax pair with spectral
parameter for a nondegenerate hyperbolic second order partial differential
equation (PDE) is equivalent to the canonical conformal structure defined by
the symbol being Einstein--Weyl on any solution in 3D, and self-dual on any
solution in 4D. The first main ingredient in the proof is a characteristic
property for dispersionless Lax pairs. The second is the projective behaviour
of the Lax pair with respect to the spectral parameter. Both are established
for nondegenerate determined systems of PDEs of any order.  Thus our main
result applies more generally to any such PDE system whose characteristic
variety is a quadric hypersurface.
\end{abstract}
\maketitle
\vspace{-5mm}

\section*{Introduction and main results}

The integrability of dispersionless partial differential equations is well
known to admit a geometric interpretation.  Twistor theory~\cite{Pe,MW}
gives a framework to visualize this for several types of integrable systems, as
demonstrated by many examples~\cite{Pb,Hi,DMT,Wa,C1}.

Recently, such a relation has been established for several classes of second
order equations in 3D and one class in 4D \cite{FK}.  Namely the following
equivalences have been established:
\vspace{-0.4cm}
\[
\vspace{-0.1cm}
\xymatrix{&
\hspace{-3cm}\parbox{3.8cm}{\fbox{\parbox{4.7cm}{\begin{center}Integrability via
\\ hydrodynamic reductions\end{center}}}
                           \\[1.1cm] }
   \ar@{<->}[ld] \ar@{<->}[rd] \hspace{-2cm} & \\
\fbox{\parbox{4.3cm}{\begin{center}Dispersionless Lax pair \\
with spectral parameter\end{center}}}
   \ar@{<=>}[rr] & \hspace{1cm} &
\fbox{\parbox{4.2cm}{\begin{center}Integrable background \\
geometry\end{center}}}}
\]

Hydrodynamic integrability in 2D (also written ``$1+1$ dimension'') was
introduced in \cite{DN} and elaborated in \cite{Ts}. Integrability via
hydrodynamic reductions in $d\geq 3$ dimensions was developed in \cite{FKh}.
This method, although constructive, is not universal, as it applies only to
translation invariant equations (invariantly, this requires the existence 
of a $d$-dimensional abelian contact symmetry group). Thus the upper part 
of the above diagram, at least at present, does not extend to 
the general class of second order PDEs.

On the other hand, the two other ingredients of the diagram are universal.
The main aim of this paper is to prove the bottom equivalence for large class
of PDE systems, including general second order PDEs, in 3D and 4D, where
``integrable background geometry'' means that a canonical conformal structure
on solutions of the equation is Einstein--Weyl in 3D and self-dual in 4D
(these geometries are ``backgrounds'' for integrable gauge
theories~\cite{C1}).

Consider a second order PDE
\begin{equation}\label{F}
\E\;:\; F(\bx,u,\p u, \p^2 u)=0
\end{equation}
for a scalar function $u$ of an independent variable $\bx$ on a connected
manifold $M$ with $\dim M=d$, where $\p u=(u_i)$ and $\p^2u=(u_{ij})$ denote
partial derivatives of $u$ in local coordinates $\bx=(x^i)$. Let $M_u$ denote
the manifold $M$ equipped with a given scalar function $u$; concretely, we may
view $M_u$ as the graph of $u$ in $M\times\R$. A tensor on $M_u$ is, by
definition, a tensor on $M$, which may also depend, at each $\bx\in M$, on
finitely many derivatives of $u$ at $\bx$.

Let $\z_F$ be the linearization of $F$ in second derivatives, i.e.,
\begin{equation*}
\z_F = \sum_{i\leq j}\frac{\p F}{\p u_{ij}}\,\p_i\p_j=
\sum_{i,j} \z_{ij}(u)\, \p_i\otimes \p_j, \quad\text{where} \quad
\z_{ij}(u):=\frac{1+\d_{ij}}{2}\frac{\p F}{\p u_{ij}}.
\end{equation*}
Invariantly, $\z_F$ defines a section of $S^2T M_u$, hence a quadratic form on
$T^*_\bx M_u$ for each $\bx\in M_u$, called the \emph{symbol} of $F$. 
If we change the defining function $F$ of $\E$,
$\z_F$ changes by a conformal rescaling on $\E$. Hence the conformal class of $\z_F$ 
along $F=0$ is an invariant of~$\E$, as is the \emph{characteristic variety}
$\chv^\E\to M_u$, the bundle whose fibre at $\bx\in M_u$ is the projective variety
$\chv^\E_\bx:=\op{Char}(\E,u)_{\bx}=\{[\te]\in \PP(T^*_\bx M_u)\,|\, \z_F(\te)=0\}$.  

We assume henceforth that~\eqref{F} is:
\begin{itemize}
\item \emph{nondegenerate}, i.e., $\z_F$ is nondegenerate at generic points of
  the zero-set $\E$ of $F$.  This is equivalent to $\det(\z_{ij}(u))\neq0$ for a
  generic solution $u$.
\item \emph{hyperbolic}, i.e., $M$ is complex and $F$ is holomorphic, or $M$
  is real, $F$ is smooth and the variety $\{[\te]\in \PP(T^*M_u\ot\C)\,|\,
  \z_F(\te)=0\}$ of complex characteristics is a complexification of $\chv^\E$
  for a generic (real) solution $u$.
\end{itemize}
The nondegeneracy of $\z_F$ implies that its inverse
\[
g_F=\sum_{i,j} g_{ij}(u) \,\dd x^i\,\dd x^j,\ \text{ where }
(g_{ij}(u))=(\z_{ij}(u))^{-1},
\]
defines a nondegenerate symmetric bilinear form on $T_\bx M_u$ for any
$(\bx,u)$ sufficiently close to a generic point of $F=0$. As in~\cite{FK}, the
corresponding conformal structure $c_F$ plays a central role in this paper.
Hyperbolicity implies that along $F=0$, $c_F$ is uniquely determined by the
bundle $\chv^\E$ of nonsingular quadric hypersurfaces because the latter is dual 
to the projectivized null cone of $c_F$.

A \emph{dispersionless Lax pair}~\cite{Za} or \emph{dLp} for~\eqref{F} can be
described as rank one covering system~\cite{Vi} of $\E$. Roughly speaking,
this means that there is a fibre bundle $\hat\pi\colon \hat M_u\to M_u$ with
connected rank one fibres, and a PDE system on $\hat M_u$ with $\E$ as a
differential corollary. There are various ways to formulate this precisely;
in this paper we adopt as a definition that there are linearly independent vector
fields $\hat X$ and $\hat Y$ on $\hat M_u$, whose coefficients depend on finitely many
derivatives of $u$, such that $\E$ is the Frobenius integrability condition
for their span $\hat\Pi\sub T\hat M_u$---this is the condition that $[\hat
  X,\hat Y]$ is a section of $\hat\Pi$, so that $\hat\Pi$ is tangent to a
foliation of $\hat M_u$ by surfaces.

The leaf space of this foliation (for a solution $u$ of $\E$) is sometimes
called the \emph{twistor space} $\tw$ of the dLp in 4D (or \emph{minitwistor space}
in 3D). However, a well-behaved twistor space may only exist over suitable
open subsets of $M_u$, so its geometry is more conveniently described on the
\emph{correspondence space} $\hat M_u$.  For instance, functions on $\tw$ 
correspond to solutions of a linear PDE system for functions on $\hat M_u$
that are constant on the leaves of the foliation, while hypersurfaces in
$\tw$ may be described as solutions of a quasilinear PDE system for sections
of $\hat\pi\colon \hat M_u\to M_u$ that are unions of such leaves. Either of
these PDE systems can equivalently be called a dLp: $\E$ ensures their
compatibility.

A fibre coordinate $\l\colon\hat M_u\to \R$ is called a \emph{spectral
  parameter} and it locally identifies $\hat M_u$ with $M_u\times\R$. We may then
write $\hat X=X+m\,\p_\l$, $\hat Y=Y+n\,\p_\l$ where $X,Y$ are $\l$-parametric
vector fields on $M_u$, and a section of $\hat\pi\colon\hat M_u\to M_u$ may be
written $\l=q(\bx)$ for a function $q\colon M_u\to\R$. The dLp $\hat\Pi$ then
has the geometric interpretation that $\E$ is the integrability condition for
the existence of many foliations of $M_u$ by surfaces which are tangent at any
$\bx\in M_u$ to the span $\Pi=\hat\pi_*(\hat\Pi)$ of $X$ and $Y$ at
$\bx$, with $\l=q(\bx)$.

A fundamental motivation for this paper is that in all known examples of such
dLps, it has been observed (see e.g.~\cite{FK}) that $\Pi$ is
\emph{characteristic} for $\E$ in the sense that for any $1$-form $\te$ on
$M_u$ with $\Pi\sub\ker\te$, we have $[\te]\in \chv^\E$. Thus for any
solution $u$ of $\E$, $M_u$ admits many foliations by \emph{characteristic
  surfaces}, and indeed $\E$ is the integrability condition for their
existence.  Our first result establishes this characteristic property in
considerable generality.

\begin{theorem}\label{t:char} Let $\hat{\Pi}$ be a dLp
on $\hat\pi\colon\hat M_u\to M_u$ for a determined PDE system $\E$ of order
$\ell$ on $M_u$. Then $\Pi=\hat\pi_*(\hat\Pi)$ is characteristic for $\E$.
\end{theorem}

We refer to Sections~\ref{Sec:1} and~\ref{Sec:2}, or~\cite{KV,KL,Vi}, for
discussion of more general PDE systems and their characteristic varieties: in
this introduction, we focus on second order scalar PDEs. For such PDEs, the
characteristic condition means that for each solution $u$ and each
$\hat\bx\in\hat M_u$, $\Pi_{\hat\bx}$ is a coisotropic $2$-plane for the
conformal structure $c_F$.  By nondegeneracy of $c_F$, such $2$-planes can
only exist for $2\leq d\leq 4$: for $d=2$, the condition is vacuous; for
$d=3$, $\Pi_{\hat\bx}$ is then tangent to the null cone of $c_F$ (i.e.,
degenerate); for $d=4$, $\Pi_{\hat\bx}$ is then contained in the null cone
(i.e., totally isotropic). In the real case, the characteristic condition
further implies that $c_F$ has (up to sign) signature $(2,1)$ for $d=3$ or
$(2,2)$ for $d=4$.  We assume this henceforth.

For both $d=3$ and $d=4$, the coisotropic $2$-planes at each point $\bx\in M$
form a $1$-dimensional submanifold of the grassmannian $\op{Gr}_2(T_{\bx}M)$.
For $d=3$ this submanifold is a rational curve ($\cong\PP^1$, the projective
line) canonically isomorphic to the conic $\chv^\E\sub\PP(T^*_{\bx}M)$.  For
$d=4$, it is a disjoint union of two rational curves, corresponding to the two
rulings of the quadric surface $\chv^\E$; the points of the two components are
called $\a$-planes and $\b$-planes 
depending on whether the $2$-planes are self-dual or anti-self-dual.

If $\Pi$ is coisotropic and is also an immersion, we may thus identify $\hat
M_u$ locally with the $\PP^1$-bundle whose fibre over $\bx\in M_u$ consists of
all coisotropic $2$-planes for $d=3$ or the $\a$-plane component for
$d=4$. Under this identification, $\Pi\to \hat M_u$ becomes the tautological
bundle of coisotropic $2$-planes.  Any \emph{Weyl connection} $\nabla$ on
$M_u$ (a torsion-free conformal connection on $M$ depending on finitely many
derivatives of $u$) induces a connection on $\hat M_u\to M_u$ and hence a
horizontal lift of $\Pi$ to distribution $\hat\Pi_\nabla\sub T\hat M_u$.

If $d=4$, it is well-known~\cite{Pe} that $\hat\Pi_\nabla$ is independent of
$\nabla$ (i.e., \emph{conformally invariant}), and is integrable if and only
if $(M_u,c_F)$ is is \emph{self-dual} (SD), i.e., the Weyl tensor $W_{c_F}$
satisfies $W_{c_F}=*W_{c_F}$. The integral surfaces of $\hat\Pi_\nabla$ then
project to $\alpha$-surfaces for $c_F$.

If $d=3$, it is similarly well known~\cite{Cartan,Hi} that $\hat\Pi_\nabla$
is integrable if and only if $(M_u,c_F,\nabla)$ is \emph{Einstein--Weyl} (EW),
i.e., the symmetrized Ricci tensor of $\nabla$ is proportional to any metric
$g_F$ in the conformal class: $\op{Sym}(\op{Ric}^\nabla)=\Lambda\, g_F$,
$\Lambda\in C^\infty(M_u)$. The integral surfaces of $\hat\Pi_\nabla$ then
project to totally geodesic null surfaces for $(c_F,\nabla)$.

A dLp $\hat\Pi$ for $\E$ arising in this way for $d=3,4$ will be called \emph{standard}.
Two dispersionless Lax pairs $\hat\Pi$, $\hat\Pi'$ will be called \emph{$\E$-equivalent}, 
if $\hat\Pi=\hat\Pi'$ on $\hat M_u$ for any solution $u$ of $\E$.

It is an open question in the theory of integrable systems how many non-equivalent coverings
a given $\E$ can possess. Our second result claims that coverings of dLp type
are essentially unique under a certain nondegeneracy condition on $\hat\Pi$. 
This condition, given in Definition \ref{Dnd} of Section \ref{Sec:33}, depends
only on $\Pi=\hat\pi_*(\Pi)$, implies that $\Pi$ immerses, and holds in all
examples we know of.

The result is straightforward when $d=4$, but when $d=3$, it shows that $\hat\Pi$
can be assumed \emph{projective}: for some choice of spectral parameter
$\l$ and vector fields $\hat{X},\hat{Y}$ generating $\hat{\Pi}$, the
coefficients of these vector fields are cubic polynomials in $\l$. The result
is again not restricted to second order scalar PDEs: we require only that
$\chv^\E_\bx$ is a nonsingular quadric hypersurface for each $\bx\in M_u$.

\begin{theorem}\label{t:proj} 
Let $\E: F=0$ be a determined PDE system of order $\ell$ whose characteristic
variety $\chv^\E$ is a bundle of nonsingular quadric hypersurfaces in $\PP(T^*M_u)$.  
Then any nondegenerate dLp $\hat\Pi$ is $\E$-equivalent to a standard dLp $\hat\Pi_\nabla$ 
for some Weyl connection $\nabla$.
\end{theorem}

Our third (and main) result establishes an equivalence between the
dispersionless integrability of $\E$ and the EW/SD property of $c_F$. However,
to achieve this, some care is needed in the formulation of both properties.
First, in the integrability of the dLp $\hat\Pi$, we must account for
$\E$-equivalence. Thus we say that $\E$ is \emph{integrable by a dLp
  $\hat\Pi$} if for any $\hat\Pi'$, which is $\E$-equivalent to $\hat\Pi$, the
Frobenius integrability condition for $\hat\Pi'$ is a nontrivial differential
corollary of $\E$. Secondly, the EW/SD property should be a nontrivial
differential corollary of $\E$. The need for nontriviality here is illustrated
by PDEs of the form $\Delta u=f(\bx,u,\p u)$: this is non-integrable for
generic $f$, but its conformal structure is independent of $u$ and is flat, so
the EW/SD property holds automatically. For more general PDE systems $\E$, a
differential corollary of $\E$ \emph{holds nontrivially} if it is not a
consequence of a proper subsystem $\E'$ of $\E$. 
We can now obtain the main result as follows.

\begin{theorem}\label{t:main}
Let $\E:F=0$ be a determined PDE system in \textup{3D} or \textup{4D} whose
characteristic variety $\chv^\E$ is a bundle of nonsingular quadric
hypersurfaces, for instance a nondegenerate hyperbolic second order scalar PDE
\eqref{F}. Let $c_F$ be the corresponding conformal structure.  Then $\E$ is
integrable by a nondegenerate dLp if and only if
\begin{itemize}
\item[3D:] the Einstein--Weyl property for $c_F$ holds nontrivially on
  solutions of $\E$\textup;
\item[4D:] the self-duality property for $c_F$ holds nontrivially on
  solutions of $\E$.
\end{itemize}
\end{theorem}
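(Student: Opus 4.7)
The plan is to deduce the theorem from the three ingredients prepared earlier in the paper: the immersion criterion for a characteristic congruence $\Pi$ (Section~\ref{S:Q}), the normality of the standard Einstein--Weyl / self-dual Lax pair (Section~\ref{S:N}), and the characteristic property of dispersionless Lax pairs (Theorem~\ref{t:char}). I would treat the two implications separately.

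For the forward implication, suppose $(\hat M_u,\hat\Pi)$ is a nondegenerate dispersionless Lax pair for $\E$. First I would apply Theorem~\ref{t:char} to conclude that on every solution $u$ the $2$-plane $\Pi(\hat\bx)$ is coisotropic for $c_F$; combined with the nondegeneracy hypothesis and Section~\ref{S:Q}, this forces $\Pi$ to be an immersion. Since in 3D or 4D the coisotropic $2$-planes of a quadric form a $\PP^1$-bundle (one connected component in 4D), this identifies $\hat M_u$ locally with the corresponding $\PP^1$-bundle $Z\to M_u$ together with its tautological $2$-plane congruence. Next I would invoke the normality result of Section~\ref{S:N} to argue that, after replacing $\hat\Pi$ by an $\E$-equivalent distribution (which by definition preserves the Frobenius class), one may assume $\hat\Pi$ coincides on solutions with the canonical lift of the tautological congruence induced by some Weyl connection $\n$ on $M_u$; in 4D the lift is already conformally invariant, so this simply matches $\hat\Pi$ with the unique canonical lift. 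Finally, the classical theorems of Cartan (3D) and Penrose (4D) recalled in the excerpt translate the Frobenius integrability of this canonical lift into the Einstein--Weyl or self-dual property for $c_F$, and nontriviality as a differential consequence of $\E$ transfers directly.

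For the converse, assume the EW (resp.\ SD) condition holds for $c_F$ on every solution. I would construct an explicit Lax pair by taking $\hat M_u := Z$, (a component of) the coisotropic $\PP^1$-bundle, and letting $\hat\Pi$ be the Weyl-connection lift of the tautological congruence in 3D or the canonical conformally invariant lift in 4D. This is nondegenerate by construction and characteristic in the sense of Definition~\ref{DLP}, and the Cartan/Penrose theorems, applied in the opposite direction, imply $[\Gamma(\hat\Pi),\Gamma(\hat\Pi)]\sub\Gamma(\hat\Pi)$ modulo $\E$. The required nontriviality is inherited from the assumed nontriviality of the EW/SD hypothesis. Any $\hat\Pi'$ that is $\E$-equivalent to $\hat\Pi$ has the same projected congruence, so it differs from $\hat\Pi$ only vertically and remains integrable on solutions.

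The step I expect to be the main obstacle is the normality argument in the forward implication: a general characteristic dispersionless Lax pair is a priori just a distribution on a $\PP^1$-bundle whose projection is the tautological $2$-plane congruence, and one has to argue that up to $\E$-equivalence it is exactly a Weyl-connection lift. That is precisely the purpose of Section~\ref{S:N}; once it is available, the theorem reduces to combining Theorem~\ref{t:char}, the Section~\ref{S:Q} immersion criterion, and the classical twistor correspondences.
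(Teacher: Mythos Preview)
Your overall architecture matches the paper's: Theorem~\ref{t:char} gives the characteristic property, Proposition~\ref{prp-u} gives the immersion, Section~\ref{S:N} identifies the lift on solutions with the standard EW/SD Lax pair, and the Cartan/Penrose correspondence finishes. One point of confusion in your forward implication: you propose to ``replace $\hat\Pi$ by an $\E$-equivalent distribution'' in order to match it with a Weyl-connection lift, but an $\E$-equivalent replacement by definition leaves $\hat\Pi$ unchanged on solutions, so it cannot help here. The actual argument is more direct: on any solution $u$, $\hat\Pi$ is integrable, hence $\p\hat\Pi=\hat\Pi$, hence $\hat\Pi$ is already normal; the lemmas of Section~\ref{S:N} then identify it on that solution with the lift coming from a unique Weyl connection (3D) or the unique normal lift (4D). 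Thus the standard Lax pair is $\E$-equivalent to $\hat\Pi$, and since $\hat\Pi$ is assumed to be a dispersionless Lax pair, the integrability of \emph{any} $\E$-equivalent pair---in particular the standard one, whose integrability is exactly EW/SD---is a nontrivial corollary of~$\E$.

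There is a genuine gap in your converse. To show the standard Lax pair $\hat\Pi$ is a dispersionless Lax pair for $\E$, you must check that for \emph{every} $\E$-equivalent $\hat\Pi'$ its Frobenius condition is a nontrivial corollary of $\E$. Your assertion that such a $\hat\Pi'$ ``has the same projected congruence, so it differs from $\hat\Pi$ only vertically and remains integrable on solutions'' is both inaccurate off-shell (there $\Pi'$ need not equal $\Pi$, so the difference need not be purely vertical) and beside the point: integrability on solutions is automatic since $\hat\Pi'=\hat\Pi$ there; the issue is whether the integrability of $\hat\Pi'$ could follow from a strictly weaker equation~$\E'$. The paper closes this by feeding $\hat\Pi'$ back through the forward-direction argument: if its integrability held under $\E'$, the same reasoning would force the EW/SD property to hold under $\E'$ as well, contradicting the assumed nontriviality of EW/SD as a corollary of~$\E$. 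You should incorporate this step.
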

\begin{proof} As a preliminary, note that if $F$ has order $\ell$, then $c_F$
depends pointwise only on derivatives of $u$ of order $\leq\ell$ (or
$\leq(\ell-1)$ if $F$ is quasilinear) and so is defined and is nondegenerate 
for almost any $u$ (not necessarily a solution). Thus $\hat\Pi_\nabla$ is defined 
for any Weyl connection $\nabla$ over an open subset of $M_u$, and its 
integrability there is equivalent to the EW condition for
$(c_F,\nabla)$ when $d=3$ and the SD condition for $c_F$ when $d=4$.

Suppose first that $\hat\Pi\sub T\hat M_u$ is a dLp for $\E$. By
Theorem~\ref{t:char}, $\Pi=\hat\pi_*(\hat\Pi)$ is characteristic, i.e., when
$F=0$, $\Pi$ is coisotropic for the conformal structure $c_F$ (and for $d=4$
we orient $M_u$ so that $\Pi$ is a congruence of $\alpha$-planes).
Nondegeneracy of $\hat\Pi$ implies that $\Pi$ immerses into $\op{Gr}_2(TM_u)$
and so we may assume that $\hat M_u$ is an open subset of the bundle of
coisotropic $2$-planes for all solutions $u$, and hence also on an open
neighbourhood of $(\bx,u)$ where $c_F$ is nondegenerate. Then by Theorem
\ref{t:proj}, $\hat\Pi$ is $\E$-equivalent to a standard dLp $\hat\Pi_\nabla$
over any open subset of $M_u$. Hence the EW/SD condition is a nontrivial
differential corollary of $\E$, as required.

Conversely, suppose that the EW/SD condition is a nontrivial differential
corollary of $\E$ (for some Weyl connection $\nabla$ when $d=3$), and let
$\hat\pi\colon \hat M_u\to M_u$ be the bundle of null $2$-planes for $d=3$, or
the bundle of $\alpha$-planes for $d=4$. Then if $\hat\Pi$ is $\E$-equivalent
to $\hat\Pi_\nabla$ (for any Weyl connection $\nabla$ when $d=4$) on an open
subset of $M_u$, the integrability of $\hat\Pi$ is a differential corollary of
$\E$ on that open subset (since this is true for $\hat\Pi_\nabla$). 

Finally if any such $\hat\Pi$ is a differential corollary of a proper subsystem 
$\E'$ of $\E$, then the first part of the argument implies that the EW/SD property 
is also a consequence of $\E'$, contradicting nontriviality.
\end{proof}

\begin{rk}
Often, in the physics literature, little distinction is made between a system $\E$ and
a system $\E'$ obtained by differentiation or potentiation of $\E$. While some properties of the
equation can change, for instance the symmetry algebra and dimension of the solution space,
the characteristic variety and integrability of $\E$ are unaltered.
It is easy to adjust the formulation of the theorems to such variations between $\E$ and $\E'$.
\end{rk} 

This theorem shows that the EW and SD equations are master equations, in 3D
and 4D respectively, for determined integrable PDE systems whose
characteristic variety is a bundle of nonsingular quadric hypersurfaces.  It
applies in particular to first order systems and higher order scalar equations
whose (principal) symbol is a power of a nondegenerate quadratic form.
However, the EW and SD equations are not themselves determined systems because
of the gauge freedom coming from diffeomorphism invariance. Determined forms
of the EW and SD equations were derived in~\cite{DFK}, where it was shown in
particular that the Manakov--Santini system~\cite{MS} is equivalent to a
determined form of the EW equation. Because of their importance, we will present
novel derivations of these determined master equations using the methods of
this paper.

\smallbreak

Theorem~\ref{t:main} is useful for at least two reasons. First, the geometric
characterizations of integrability are algorithmic. In 4D, the anti-self-dual
part of the Weyl tensor of $c_F$ on $M_u$ can be computed explicitly from
finitely many derivatives of $u$, and so we can check whether it vanishes on
solutions by imposing the equation and its prolongations formally---we do not
have to be able to resolve the PDE or even to prove its solvability. In 3D,
the situation is complicated slightly by the choice of Weyl connection. For
the classes of translation-invariant equations considered in \cite{FK},
there is a universal formula for the Weyl connection, but this formula is not
generally applicable (it is not contact-invariant).  Nevertheless, except in
degenerate situations, the choice is uniquely determined by finitely many
derivatives of $c_F$, and so the EW condition may again be verified by
formally imposing the PDE on a tensor depending on finitely many derivatives
of $u$. This effective integrability criterion has many applications: for
instance, it was applied in \cite{KM} to obtain infinitely many new integrable
equations in 4D as deformations of integrable Monge-Amp\`ere equations of
Hirota type.

Secondly, the EW/SD property provides a canonical characteristic Lax pair,
which, if the PDE on $u$ has order $\ell$, depends on at most $\ell+1$
derivatives of $u$ ($\ell$ if the PDE is quasilinear), and satisfies a
`normality' condition off shell which is useful in computations.  None of
these properties were assumed a priori. For example, the standard Lax
pair~\cite{MS} for the Manakov--Santini system is not normal, and the normal
Lax pair may be understood as a Lax pair for an equivalent PDE system
presented in~\cite{DFK}, which we also discuss.

Apart from the Manakov--Santini system (and variants), Theorem~\ref{t:main}
encompasses many examples in 3D, such as the Lax pairs arising in the central
quadric ansatz~\cite{FHZ}, for EW manifolds in diagonal
coordinates~\cite{DFK}, and for the systems of two first order PDE on two
unknown functions studied in~\cite{DFKN1}. In 4D, there are Lax pairs having
no derivatives with respect to the spectral parameter $\l$, which cannot
be normal, such as the hypercomplex Lax pair of Dunajski and Joyce
(see~\cite{C1,DFK}) and Lax pairs for Monge-Amp\`ere equations of Hirota
type~\cite{DF}. However, normal Lax pairs are always available, and provide a
canonical choice in 4D, while in 3D they are given by a choice of Weyl
connection.

\smallbreak

We begin the body of the paper in Section~\ref{Sec:1} by presenting a rigorous
definition of what should be called a (nondegenerate) dispersionless Lax pair,
motivated by examples. The search for such formalism in general has a long
history: see \cite{CN,Mar} for discussion in the dispersive context. A
fundamental role is played by the $\l$-dependent family $\Pi=\hat\pi_*(\hat\Pi)$ 
of rank $2$ subbundles of $TM_u$, which we call a \emph{$2$-plane congruence}. 
We also explain the normality condition
mentioned above, observing that in 4D it determines $\hat\Pi$ from $\Pi$.

In Section~\ref{Sec:2}, we prove Theorem~\ref{t:char}.  Here we treat the
symbol and characteristic variety of general PDE systems. 
For both this, and the proof of Theorem~\ref{t:proj}, we require some
jet theory, which we have generally suppressed in the rest of the paper,
cf.~Remark~\ref{r:jet}. Having proven Theorem~\ref{t:char}, as an addendum, we
show in Section~\ref{S:Q} that a Lax pair which is characteristic for a
quadric is nondegenerate, and give a computational criterion for the existence
of such a quadric for nondegenerate Lax pairs.

For PDE systems whose characteristic variety is a quadric,
Theorem~\ref{t:char} shows that $\Pi$ is essentially unique, which
considerably constrains the choice of $\hat\Pi$, especially in 4D. In 3D,
however, more work is needed to prove Theorem~\ref{t:proj}, which we develop
in Section~\ref{Sec:3}. We first discuss the standard EW/SD Lax pairs, which
are not only normal, but projective.  We also introduce and motivate a
stronger nondegeneracy condition on the Lax pair $\hat\Pi$. Roughly speaking,
this condition means that the equation $\E$ appears nontrivially in the symbol
of the integrability condition for $\hat\Pi$ (i.e., at highest order). From
this we deduce the projective property of the Lax pair, and hence prove
Theorem~\ref{t:proj}.

In Section~\ref{Sec:4} we discuss applications and extensions of the viewpoint
we have developed. In particular, we discuss pseudopotentials and their
relation to contact coverings, the twistor interpretation of this
relationship, and potential generalizations of the theory.

\section{Lax pairs: nondegeneracy and normalization}\label{Sec:1}

\subsection{Dispersionless pairs and $2$-plane congruences}

We begin with a well-known prototypical example.
\begin{ex}[dKP] The dispersionless Kadomtsev--Petviashvilli (dKP) equation
(see for example~\cite{DMT}) is the second order scalar PDE
\begin{equation}\label{dKP}
F(\bx,u,\p u, \p^2u):=u_{xt}+(uu_t)_t-u_{yy}=0
\end{equation}
for a scalar function $u$ on a $3$-manifold $M_u\simeq M$ with coordinates
$(x,y,t)$. (This differs from some standard conventions by the interchange
$t\leftrightarrow x$ and/or $u\mapsto-u$.)  The dKP equation is the
compatibility condition $\psi_{xy}=\psi_{yx}$ of the first order linear system
\[
\psi_x - (\l^2 - u)\, \psi_t - (u_y+\l u_t)\,\psi_\l=0, \qquad
\psi_y - \l\, \psi_t - u_t\,\psi_\l=0,
\]
for a scalar function $\psi$ on $\hat M_u= M_u\times\R$ with coordinates
$(x,y,t,\l)$. It may also be described as the compatibility condition
$q_{xy}=q_{yx}$ for the quasilinear system
\[
q_x=(q^2-u)\,q_t-q\,u_t-u_y,\qquad q_y=q\,q_t-u_t
\]
for a scalar function $q=q(x,y,t)$ on $M_u$. In more geometric terms, $\psi$
is a function on $\hat M_u$ which is invariant under the vector fields
\begin{equation}\label{dKPdlp}
\hat X = \p_x - (\l^2 - u)\, \p_t - (u_y+\l u_t)\,\p_\l, \qquad
\hat Y = \p_y - \l\, \p_t - u_t\,\p_\l,
\end{equation}
while $q$ defines a section of $\hat\pi\colon \hat M_u\to M_u$ such that $\hat
X$ and $\hat Y$ are tangent to its image. The compatibility condition in
either case is that $\hat X$ and $\hat Y$ span a distribution $\hat\Pi\sub
T\hat M_u$ which is (Frobenius) integrable, i.e., $[\hat X,\hat Y]$ is also
section of $\hat\Pi$.  In this example, the Frobenius integrability condition
holds if and only if $[\hat X,\hat Y]=0$ if and only if~\eqref{dKP} is
satisfied.
\end{ex}

In this paper, we take the distribution $\hat\Pi$ on $\hat M_u$ to be the
fundamental object.
\begin{dfn}\label{DLP} A \emph{dispersionless pair} of order $\leq N$
is a bundle $\hat\pi\colon \hat M_u\to M_u$ called the \emph{correspondence
  space}, whose fibres are connected curves, together with a rank two
distribution $\hat\Pi\sub T\hat M_u$ such that:
\begin{itemize}
\item for all $\hat\bx\in \hat M_u$, $\hat\Pi_{\hat\bx}\sub T_{\hat\bx} \hat
  M_u$ depends on $u$ only through its partial derivatives at
  $\bx=\hat\pi(\hat\bx)\in M_u$ of order $\leq N$;
\item $\hat\Pi$ is transverse to the fibres of $\hat\pi$, i.e.,
$\hat\Pi\cap\ker\hat\pi_*=0$.
\end{itemize}
A \emph{spectral parameter} is a local fibre coordinate $\l=\l(\hat\bx)\colon
\hat M_u\to\R$ on $\hat M_u$.
\end{dfn}
If $\hat\Pi=\langle \hat X,\hat Y\rangle$, we thus obtain a first order
linear system
\begin{equation}\label{eq:psi}
\hat{X}(\psi)=0,\qquad \hat{Y}(\psi)=0
\end{equation}
for functions $\psi$ on $\hat M_u$. In terms of a spectral parameter $\l$, a
section of $\hat\pi$ has image $\l=q(\bx)$ for a function $q\colon M_u\to\R$,
and the corresponding first order quasilinear system is given by
\begin{equation}\label{eqQ}
\hat{X}(\l-q(\bx))|_{\l=q(\bx)}=0,\qquad
\hat{Y}(\l-q(\bx))|_{\l=q(\bx)}=0.
\end{equation}
The system~\eqref{eq:psi} is compatible if and only if~\eqref{eqQ} is
compatible if and only if the distribution $\hat\Pi$ is integrable. Then
solutions of~\eqref{eq:psi} and \eqref{eqQ} describe respectively functions
and hypersurfaces in the (local) leaf space of the folation tangent to
$\hat\Pi$ (the twistor or minitwistor space). The integrability condition of
$\hat\Pi$ is a PDE on $u$ of order $\leq N+1$. Roughly speaking---see
Definition~\ref{DIS}---dispersionless integrable systems are PDEs arising as
such integrability conditions.

We need not restrict attention to scalar PDEs. Indeed we wish to encompass the
following important system due to Manakov and Santini~\cite{MS}.

\begin{ex}[MS]\label{ex:MS} The Manakov--Santini (MS) system is the second
order coupled system of PDEs
\begin{equation}\label{MS}
  S(u)+u_t^2=0, \qquad S(v)=0
\end{equation}
for functions $(u,v)$ of $(x,y,t)$, where
\begin{equation}\label{eq:Sdef}
  S = \p_t\p_x+v_t\, \p_t\p_y+(u-v_y)\, \p_t^2-\p_y^2.
\end{equation}
(As with the dKP equation, we have aligned our coordinate conventions for
consistency within this paper.  Conventions in the
literature~\cite{MS,DFK,PCC} vary, but are all equivalent to the one here by
point transformations.)

As noted in~\cite{MS}, system~\eqref{MS} is the Frobenius integrability
condition for the dispersionless pair $\hat\Pi=\langle \hat X,\hat Y\rangle$
spanned by
\begin{equation}\label{LP-MS}
\hat X=\p_x-(\l^2+v_t\l-u+v_y)\,\p_t
-(u_t\l+u_y)\,\p_\l, \qquad
\hat Y=\p_y-(\l+v_t)\,\p_t-u_t\,\p_\l.
\end{equation}
The corresponding quasilinear covering system, which was studied in
\cite{PCC} and more recently in~\cite{Pr}, is
\[
q_x=(q^2+q\,v_t-u+v_y)\,q_t-q\,u_t-u_y,\qquad
q_y=(v_t+q)\,q_t-u_t.
\]
When $v=0$, the MS system reduces to the dKP equation, and~\eqref{LP-MS}
to~\eqref{dKPdlp}. When $u=0$, the dLp~\eqref{LP-MS} has no derivatives with
respect to the spectral parameter.
\end{ex}

If $\hat\Pi$ is a dispersionless pair, then $\Pi:= \hat\pi_*(\hat\Pi)$ is a
rank $2$ subbundle of $\hat\pi^*TM_u$, so at each $\bx\in M_u$, we have a
$1$-parameter family of $2$-dimensional subspaces of $T_\bx M$.
\begin{dfn}
A \emph{$2$-plane congruence} $\Pi$ over $M_u$ is a section $\Pi\colon \hat
M_u\to\hat\pi^*\op{Gr}_2(TM_u)$, where $\op{Gr}_2(TM_u)\to M_u$ is the bundle
whose fibre over $\bx\in M_u$ is the grassmannian of $2$-dimensional vector
subspaces of $T_\bx M_u$.
\end{dfn}
Conversely, the passage from a $2$-plane congruence $\Pi$ to a dispersionless
pair $\hat\Pi$ can be understood as a \emph{lift} with respect to the projection 
$\hat\pi\colon \hat M_u\to M_u$.  It is convenient to describe the lift condition 
in terms of the rank $3$ distribution $\Delta=\hat\pi_*^{-1}(\Pi)\sub T\hat M_u$:
$\hat\Pi$ is a lift of $\Pi$ if and only if it
is a rank $2$ subbundle of $\Delta$ transverse to the fibres of $\hat\pi$. For
any distributions $D_1,D_2\sub T\hat M_u$ we denote by $[D_1,D_2]$ the
distribution generated by Lie brackets of sections of $D_1$ and $D_2$. Thus
the integrability condition for $\hat\Pi$ is that its derived distribution
$[\hat\Pi,\hat\Pi]$ is equal to $\hat\Pi$.

More explicitly, we choose a spectral parameter $\l$ and let $X,Y$ be
linearly independent $\l$-parametric vector fields on $M_u$ depending at each $\bx$
only on the partial derivatives of $u$ at $\bx$ of order $\leq N$. Then
$\Pi=\langle X,Y\rangle$ is a $2$-plane congruence, and $\Delta$ is the span
of the coordinate lifts of $X,Y$ (still denoted $X,Y$, with
$X(\l)=0=Y(\l)$) and $\p_\l$. Then we write a dispersionless pair
$\hat\Pi$ on $\hat M_u$, with $\hat\pi_*(\hat\Pi)=\Pi$ as the span
$\hat\Pi=\langle \hat X,\hat Y\rangle$ of vector fields
\begin{equation}\label{XYhat}
\hat X=X+m\,\p_\l,\qquad \hat Y=Y+n\,\p_\l
\end{equation}
with $\hat\pi_*(\hat X)=X$ and $\hat\pi_*(\hat Y)=Y$, where $m,n$ are
functions of $\bx$, $u$, and the spectral parameter $\l$.  The derived
distribution of $\hat\Pi$ is now $[\hat\Pi,\hat\Pi]=\langle\hat X,\hat
Y,[\hat X,\hat Y]\rangle\sub T\hat M_u$, which generically has rank
$3$, and the integrability condition is that it has rank $2$.

In 3D, we may introduce coordinates $(x,y,t)$ and choose generators of $\Pi$
of the form
\begin{equation}\label{XY3}
X=\p_x-\a\, \p_t,\qquad Y=\p_y-\b\, \p_t,
\end{equation}
where the functions $\a,\b$ depend on $(x,y,t)$, $u$ and $\l$. Dually,
the annihilator $\op{Ann}(\Pi)$ of $\Pi$ in $\hat\pi^* T^*M_u$ is spanned
by the $\l$-dependent $1$-form
\begin{equation}\label{thet}
\te= \dd t + \a\,\dd x + \b\, \dd y,
\end{equation}
$\op{Ann}(\Delta)$ is spanned by the pullback of $\te$ to $\hat
M_u$ (which we still denote by $\te$), while $\op{Ann}(\hat\Pi)$ is
spanned by $\te$ and the $1$-form
\begin{equation}\label{eq:eta}
\eta= \dd\l - m\, \dd x - n\, \dd y
\end{equation}
on $\hat M_u$. Hence $\hat\Pi$ is the radical of the $2$-form
$\te\wedge\eta$.

In 4D, we similarly may assume that we have coordinates $(x,y,z,t)$ and
generators
\begin{equation}\label{XY4}
X=\p_x-\a\, \p_z-\b\, \p_t,\qquad Y=\p_y-\cg\, \p_z-\d\, \p_t,
\end{equation}
where $\a,\b,\cg,\d$ depend on $(x,y,z,t)$, $u$ and $\l$. Thus $\op{Ann}(\Pi)$
is spanned by
\begin{equation}\label{zeth}
  \zeta= \dd z + \a\, \dd x+ \cg\,\dd y,\qquad
  \te= \dd t + \b\,\dd x + \d\, \dd y,
\end{equation}
$\op{Ann}(\Delta)$ by their pullbacks, and $\op{Ann}(\hat\Pi)=\langle
\zeta,\te,\eta\rangle$ with $\eta$ given by~\eqref{eq:eta}.  In both 3D and
4D, with $\hat X$ and $\hat Y$ given by~\eqref{XYhat}, 
$\hat\Pi$ is integrable if and only if $[\hat X,\hat Y]=0$.

\subsection{Normality and nondegeneracy}

In order for $\hat\Pi$ to be a dispersionless Lax pair for an equation $\E:
F=0$, we require that the integrability condition $[\hat\Pi,\hat\Pi]=\hat\Pi$
holds modulo $\E$, i.e., when $F=0$ or, to use physics terminology, \emph{on
  shell}.

\begin{dfn} We say that the dispersionless pair $\hat\Pi\sub T\hat M_u$
is \emph{normal} if $[\hat\Pi,\hat\Pi]\sub \Delta$ off shell, i.e., without
assuming $F=0$. In other words, $\hat\pi_*([\hat\Pi,\hat\Pi])=\Pi$.
\end{dfn}
If $\hat\Pi=\langle \hat X,\hat Y\rangle$ with $\hat X$ and $\hat Y$ defined
by~\eqref{XYhat},~\eqref{XY3} and~\eqref{XY4}, then $\hat\Pi$ is normal if and
only if $[\hat X,\hat Y]$ is a multiple of $\p_\l$. In this case the
integrability condition reduces to the vanishing of the $\p_\l$-component
$\hat X(n)-\hat Y(m)$ of the vector field $[\hat X,\hat Y]$ (identically in
$\l$).

When $d=4$, a generic $2$-plane congruence $\Pi$ has a unique normal lift.
Indeed, generically, $\Delta$ is nonholonomic with $[\Delta,\Delta]=T\hat
M_u$, i.e., it has the growth vector $(3,5)$, and following
Cartan~\cite[\S11]{Cartan1910}, there is a unique rank $2$ subbundle
$\hat{\Pi}\sub\Delta$ with $[\hat{\Pi},\hat{\Pi}]=\Delta$.  Such rank $2$
distribution $\hat{\Pi}$ either has the growth vector $(2,3,5)$ or is
integrable. The former case corresponds to Cartan's celebrated Pfaffian system
\cite{Cartan1910} (for nonintegrable systems or off shell), the latter case
corresponds to a dispersionless Lax pair (on shell).

The genericity condition we need here is as follows (and we formulate a
similar condition for $d=3$ which we will use later).

\begin{dfn} A $2$-plane congruence $\Pi$  is called \emph{nondegenerate} if
\begin{equation}\label{eq:ndg}
\begin{aligned}
\te\wedge\te_\l\wedge \te_{\l\l}&\neq 0,&&\text{where}&
\op{Ann}(\Pi)&=\langle\te\rangle&\text{for}\quad
d&=3;\\ \te\wedge\zeta\wedge\te_\l\wedge\zeta_\l&\neq 0,&&\text{where}&
\op{Ann}(\Pi)&=\langle\te,\zeta\rangle&\text{for}\quad d&=4.
\end{aligned}
\end{equation}
\end{dfn}
These conditions depend only on $\Pi$, not on the choices of $\te$ or
$\zeta$: when $d=4$ nondegeneracy means equivalently
$\varpi_\l\wedge\varpi_\l\neq 0$ where $\varpi=\te\wedge\zeta$, or dually
that $X\wedge Y\wedge X_\l\wedge Y_\l\neq0$, where $\Pi=\langle
X,Y\rangle$. If we choose $\te$ and $\zeta$ as in~\eqref{thet}
and~\eqref{zeth}, then the nondegeneracy conditions may be written explicitly
as:
\begin{align}
\a_\l\b_{\l\l}-\a_{\l\l}\b_\l&\neq 0  &&\text{for}\quad d=3;\label{z1}\\
\a_{\l}\d_{\l}-\b_{\l}\gamma_{\l}&\neq 0 &&\text{for}\quad d=4.\label{z2}
\end{align}

\begin{lem} \label{lem4D}
For $d=4$, any nondegenerate $2$-plane congruence $\Pi$ has a unique normal
lift.
\end{lem}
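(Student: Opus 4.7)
The plan is to reduce normality to a $2\times 2$ invertible linear system that uniquely determines the vertical components $m,n$ of the lift $\hat X = X + m\,\p_\l$, $\hat Y = Y + n\,\p_\l$ from~\eqref{XYhat}, where $X,Y$ are written in the coordinate form~\eqref{XY4}. I would first expand $[\hat X,\hat Y]$ on $\hat M$ directly. Viewed as vector fields on $\hat M$, $X$ and $Y$ have no $\p_\l$ component, so their Lie bracket $[X,Y]$ (computed at fixed $\l$) is itself horizontal; the remaining contributions from $[m\,\p_\l,Y]$, $[X,n\,\p_\l]$ and $[m\,\p_\l,n\,\p_\l]$ assemble to
\[
[\hat X,\hat Y] = [X,Y] + m\,Y_\l - n\,X_\l + \bigl(X(n) - Y(m) + m\,n_\l - n\,m_\l\bigr)\p_\l,
\]
with $X_\l=-\a_\l\p_z-\b_\l\p_t$ and $Y_\l=-\cg_\l\p_z-\d_\l\p_t$ lying in $\langle\p_z,\p_t\rangle$.

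Next, the normality condition demands that $\hat\pi_*[\hat X,\hat Y] = [X,Y] + m\,Y_\l - n\,X_\l$ lies in $\Pi = \langle X,Y\rangle$, i.e.\ vanishes in the $2$-dimensional quotient $T_\bx M/\Pi_\bx$. Under the natural identification of this quotient with $\langle\p_z,\p_t\rangle$, the nondegeneracy hypothesis~\eqref{z2}, $\a_\l\d_\l-\b_\l\cg_\l\neq 0$, says precisely that the classes of $X_\l$ and $Y_\l$ form a basis of $T_\bx M/\Pi_\bx$. Consequently the normality condition becomes an invertible $2\times 2$ linear system for $(m,n)$ at each point of $\hat M$, uniquely determining the lift.

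No serious obstacle is anticipated: the entire argument is linear algebra once the bracket expansion is written down. The conceptual content is the numerical coincidence, special to $d=4$, that the vertical freedom $(m,n)$ in the lift is $2$-dimensional, matching the codimension of $\Pi$ in $TM$; this is precisely why, in contrast to the $d=3$ case of the preceding lemma, no auxiliary choice (such as a Weyl connection) enters, and one recovers the well-known conformal invariance of the lift observed in~\cite{Pe}.
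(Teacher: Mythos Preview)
Your proof is correct and follows essentially the same approach as the paper: both reduce normality to the vanishing of the $\p_z$ and $\p_t$ components of $\hat\pi_*[\hat X,\hat Y]$ and solve the resulting $2\times 2$ linear system in $(m,n)$, whose invertibility is exactly the nondegeneracy condition~\eqref{z2}. The paper presents the explicit matrix solution, while you phrase the same computation more invariantly via the quotient $TM/\Pi\cong\langle\p_z,\p_t\rangle$; otherwise the arguments coincide.
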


\begin{proof} If $\hat X$ and $\hat Y$ are given by~\eqref{XYhat}
and~\eqref{XY4}, $\dd x([\hat X,\hat Y])=0 =\dd y([\hat X,\hat Y])$
identically, while $\dd z([\hat X,\hat Y])=\dd t([\hat X,\hat Y])=0$ form two
linear equations on $m,n$:
\begin{equation*}
\begin{bmatrix} \d_\l & -\b_\l \\ -\cg_\l & \a_\l\end{bmatrix}
\begin{bmatrix}m \\ n\end{bmatrix} =
\begin{bmatrix} \a\d_z+\b\d_t-\cg\b_z-\d\b_t+\b_y-\d_x \\
\cg\a_z+\d\a_t-\a\cg_z-\b\cg_t+\cg_x-\a_y\end{bmatrix};
\end{equation*}
these have a unique solution by the nondegeneracy condition~\eqref{z2}.
\end{proof}

\begin{ex}[SDM] We illustrate this with the master equation for SD structures
obtained in~\cite[Theorem 2]{DFK}. Consider a $2$-plane congruence $\Pi$
spanned by~\eqref{XY4} with $\a_\l=0=\d_\l$ and $\b_\l=1=-\cg_\l$. This is
totally isotropic for the conformal class of the metric
\begin{equation*}
g=\te_\l\,\zeta-\zeta_\l\, \te
= \dd x\, (\dd z +\a\,\dd x+\cg\,\dd y)+\dd y\, (\dd t+\b\, \dd x+\d\,\dd y),
\end{equation*}
which is independent of $\l$. In particular, there is a foliation by the
totally isotropic level surfaces of $(x,y)$. Any SD metric can be written in
this form, with the isotropic surface foliation being
anti-self-dual~\cite{DFK,PR}.  The unique normal lift of $\Pi$ is given
by~\eqref{XYhat} with
\begin{align*}
  m= \cg_x -\a_y + \d \a_t -\a \cg_z+\cg \a_z-\b \cg_t,\qquad
  n= \d_x - \b_y + \d \b_t -\a \d_z +\cg \b_z-\b \d_t.
\end{align*}
Now the $\l^2$ term of the integrability condition $\hat X(n)-\hat Y(m)=0$ is
$(\a_z+\cg_t)_z+(\b_z+\d_t)_t=0$, so $\a_z+\cg_t=s_t$ and $\b_z+\d_t=-s_z$ for
some function $s$. However, we may use the translation freedom in $\l$ to set
$s=0$, so that $\a=u_t$, $\cg=-(\l+u_z)$, $\b=\l-v_t$, $\d=v_z$ for
functions $(u,v)$ of $(x,y,z,t)$. Thus we obtain a normal dispersionless pair
$\hat\Pi=\langle\hat X,\hat Y\rangle$ with
\begin{gather*}
\hat X=\p_x-u_t\,\p_z-(\l-v_t)\,\p_t-Q(u)\p_\l,\qquad
\hat Y=\p_y+(\l+u_z)\,\p_z-v_z\,\p_t+Q(v)\p_\l,\\
\text{where}\qquad
Q = \p_x\p_z + \p_y\p_t - u_t \p_z^2 + (u_z+v_t) \p_z\p_t - v_z \p_t^2.
\end{gather*}
The corresponding quasilinear system~\eqref{eqQ} is
\begin{equation}\label{DFK-nLp}
q_x-u_tq_z-(q-v_t)q_t=-Q(u), \qquad q_y+(q+u_z)q_z-v_zq_t=Q(v),
\end{equation}
and the integrability condition reduces to $X(Q(v))+Y(Q(u))=0$, i.e.,
\begin{equation}\label{DFK}
\p_z(Q(u))=\p_t(Q(v)),\qquad
(\p_x-u_t\p_z+v_t\p_t)Q(v)+(\p_y+u_z\p_z-v_z\p_t)Q(u)=0.
\end{equation}
Up to some minor coordinate changes, this is the SD master equation (SDM)
of~\cite{DFK}.
\end{ex}

\subsection{Integrability, dispersionless Lax pairs and normalization}

When $d=3$, we do not obtain a unique normal lift.

\begin{ex}[MS] The dispersionless pair~\eqref{LP-MS} for the Manakov--Santini
system~\eqref{MS} satisfies
\[
[\hat X,\hat Y]=-G \,\p_t - F\,\p_\l
\]
with $F=S(u)+u_t^2$, $G=S(v)$, and so is not normal. However, if we set $\hat
X'=\hat X-G\,\p_\l$ then $\hat X'=\hat X$ on shell (when $F=G=0$), while
\[
[\hat X',\hat Y]=[\hat X,\hat Y]- G\,[\p_\l,\hat Y]+Y(G)\,\p_\l =-(F-G_y
+(\l+v_t)G_t)\,\p_\l
\]
so $\hat\Pi':=\langle \hat X',\hat Y\rangle$ is normal, and is integrable
if and only if
\[
G_t=0,\qquad G_y = F,
\]
i.e., $G=\psi(x,y)$ and $F=\psi_y$. However, this system is not substantively
different from the Manakov--Santini system itself, because we can make a point
transformation $u\mapsto u-\phi_y(x,y)$, $v\mapsto v-\phi(x,y)$ and if
$\phi_{yy}=\psi$, we obtain $F=0$, $G=0$.
\end{ex}

This example illustrates two important issues that we want to incorporate into
the definition of a dispersionless Lax pair $\hat\Pi$ for an equation $\E$:
first $\hat\Pi$ is only determined modulo $\E$, and secondly it can be too
restrictive in examples to require that the integrability conditions for a
dispersionless pair are equivalent to $\E$.

\begin{dfn}\label{DIS} Let $\E:F=0$ be a PDE system on $u$
and $\hat\Pi\sub T\hat M_u$ a dispersionless pair.
\begin{itemize}
\item A dispersionless pair $\hat\Pi'\sub T\hat M_u$ is \emph{$\E$-equivalent}
  to $\hat\Pi$ if $\hat\Pi=\hat\Pi'$ whenever $F(u)=0$.
\item $\hat\Pi$ is a \emph{dispersionless Lax pair} (\emph{dLp}) for $\E$ if
  for any $\hat\Pi'$ $\E$-equivalent to $\hat\Pi$, the integrability condition
  $[\hat\Pi',\hat\Pi']= \hat\Pi'$ is a nontrivial differential corollary of
  $\E$.
\end{itemize}
\end{dfn}

To make precise the notion of a differential corollary, we introduce some jet
formalism, for which we refer to \cite{KV,KL,Vi} for further details. A scalar
PDE of order $\ell$ on a manifold $M$ may be defined as an equation of the
form
\begin{equation}\label{F-ell}
F(j^\ell u)=0
\end{equation}
where $F\in C^\infty(J^\ell M)$ is a function on the bundle $\pi_\ell\colon
J^\ell M\to M$ of $\ell$-jets of functions $u$ on $M$, and $j^\ell u\colon M\to
J^\ell M$ is the $\ell$-jet of $u$, i.e., in coordinates
$j^\ell u=(\bx,u,\p u,\ldots \p^\ell u)$.

In order to discuss objects (such as dLps) depending on an arbitrary finite
jet of $u$, we use the infinite jet bundle $\pi_\infty\colon J^\infty M\to M$
which is the union (inverse limit) of $J^k M$ over all $k$.  A function
$f\colon J^\infty M\to \R$ is smooth if it is the pullback of a function on
$J^k M$ for some $k\in \N$, in which case we say $f$ has \emph{order} $\leq
k$. A choice of coordinates $x^i$ on $M$ leads to coordinates $(x^i,u_\a)$ on
$J^\infty M$, where $1\leq i\leq d$ and $\a$ runs over all symmetric
multi-indices in $d$ entries. Then $f\in C^\infty(J^\infty M)$ has order $\leq
k$ iff it is a function of $x^i$ and $u_\a$ for all $i$ and $\a=(i_1,\ldots i_j)$ 
with $|\a|=j\leq k$.

The bundle $J^\infty M$ has a canonical flat connection, the \emph{Cartan
  distribution}, for which the horizontal lift of a vector field $X$ on $M$
is the \emph{total derivative} $D_X$ characterized by $(D_X f)\circ j^\infty
u = X (f\circ j^\infty u)$ for any smooth function $f$ on $J^\infty M$. More
generally, any section $X$ of $\pi_\infty^* TM$ has a lift to a vector field
$D_X$ on $J^\infty M$, given in local coordinates by $D_X=\sum_i a_iD_i$,
where $X=\sum_i a_i \p_i$ and $D_i=\p_i+\sum_\alpha u_{i\alpha}
\p_{u_\alpha}$.

Higher order operators $\Box$ in total derivatives (also known as
\emph{$\Cc$-differential operators}) are generated as compositions of the
derivations $D_X$ with coefficients being smooth functions on $J^\infty M$.
In local coordinates, $\Box=\sum a_\alpha D_\alpha$, where $a_\alpha\in
C^\infty(J^\infty M)$ and $D_\alpha=D_{i_1}\cdots D_{i_j}$ for a multi-index
$\alpha=(i_1,\ldots i_j)$ with entries in $\{1,2,\ldots d\}$.

Let $\I_F$ be the ideal in $C^\infty(J^\infty M)$ generated by the pullback of
$F\in C^\infty(J^\ell M)$ and its total derivatives of arbitrary order. Then
the zero-set $\E_\infty\sub J^\infty M$ of $\I_F$ is the space of formal solutions
of~\eqref{F-ell}: $u$ is a solution of~\eqref{F-ell} iff $j^\infty u$ is a
section of $\E_\infty$.

These notions extend straightforwardly to PDE systems by replacing $J^\infty
M$ with the bundle $\pi_\infty\colon J^\infty(M,\V)\to M$ of jets of sections
of a fibre bundle $\V\to M$, and $F$ by a function of order $\leq \ell$ on
$J^\infty(M,\V)$ with values in a vector bundle $\W\to M$.
The ideal $\I_F$ in $C^\infty(J^\infty(M,\V))$ is now generated by the
components of $F$ and their total derivatives of arbitrary order.

In this formalism, a \emph{differential corollary} of $\E:F=0$ is a subset
of $\I_F$ (or, more invariantly, the ideal $\I\sub \I_F$ generated by this
subset and its total derivatives of arbitrary order).  It is
\emph{nontrivial} provided it is not a subset of $\I_{F'}$ for any $F'$
whose zero-set in $J^\ell(M,\V)$ contains the zero-set of $F$ in positive
codimension. For example, the ideal generated by $u_{xy}$, for a scalar
function $u(x,y,t)$, is trivial as a differential corollary of the system
$F(j^1u):=(u_x,u_y)=0$, because it is a differential corollary of the
equation $F'(j^1u):=u_xu_y=0$ in which the zero-set of $F$ has positive
codimension. However, it is a nontrivial differential corollary of the
equation $\tilde F(j^1u):=u_x=0$.

Consequently, in Definition~\ref{DIS}, the integrability condition for a dLp
$\hat\Pi$ for $\E:F=0$ need not generate $\I_F$: indeed, the freedom to replace
a dLp by an $\E$-equivalent one may change the ideal $\I\sub \I_F$ that its
integrability conditions generate.

\begin{rk}\label{r:jet} In most of the paper we make minimal use of the jet
formalism by using the philosophy~\cite{KV,Vi} that a differential equation
$\E_\infty\sub J^\infty M$ is a generalized manifold whose ``points'' are
solutions $u$, identified with $M_u=(j^\infty u)(M)\sub \E_\infty$ that is
diffeomorphic to $M$ via $\pi_\infty$. We are justified in working
``pointwise'' provided there are enough ``points'' (i.e., for generic
$u_\infty\in \E_\infty$ there is a solution $u$ with $u_\infty\in M_u$), and there
are existence theorems for hyperbolic PDEs (or rather, ultrahyperbolic PDEs
in signature $(2,2)$) which assert this in some generality. Nevertheless, we
would rather not rely upon such analytical results here, and all our results
can be formalized using jets, even if we do not do so explicitly.
\end{rk}

The following normalization result now suffices to establish
Theorem~\ref{t:proj} when $d=4$.

\begin{prop}\label{p:normal} Let $\hat\Pi$ be a dLp such
that $\Pi=\hat\pi_*(\hat\Pi)$ is nondegenerate. Then $\hat\Pi$ is $\E$-equivalent
to a normal dLp. Such a Lax pair for $d=4$ is unique.
\end{prop}

\begin{proof}
When $d=4$ the Lax pair condition (on shell) implies
\[
\dd z\circ\pi_*[\hat X,\hat Y]=\Box_1F,\qquad \dd t\circ\pi_*[\hat X,\hat Y]
=\Box_2F
\]
for some operators $\Box_1,\Box_2$ in total derivatives.  Let us modify
$\tilde X=\hat X+A(F)\p_\l$, $\tilde Y=\hat Y+B(F)\p_\l$, where $A,B$ are
operators in total derivatives to be determined (they also depend on $\l$).
The new commutation equation is
\begin{gather*}
\dd z\circ\pi_*[\tilde X,\tilde Y]=(\Box_1 + \a_\l B - \cg_\l A)F\\
\dd t\circ\pi_*[\tilde X,\tilde Y]=(\Box_2 + \b_\l B - \d_\l A)F.
\end{gather*}
Vanishing of these, equivalent to normality, can be achieved by a unique
choice of the operators in total derivatives $A,B$ due to nondegeneracy
condition \eqref{z2}.

When $d=3$, the Lax pair condition (on shell) implies similarly
\[
\dd t\circ\pi_*[\hat X,\hat Y]=\hat Y(\a)-\hat X(\b)=\Box F
\]
for some operator $\Box$ in total derivatives.  The modification $\tilde
X=\hat X+A(F)\p_\l$, $\tilde Y=\hat Y+B(F)\p_\l$ gives the new commutation
relations
\[
\dd t\circ\pi_*[\tilde X,\tilde Y]=(\Box+\a_\l B- \b_\l A)F.
\]
The equation $\b_\l A-\a_\l B=\Box$ admits the solution
$A=\a_{\l\l}\Box/(\b_\l\a_{\l\l}-\a_\l \b_{\l\l})$ and
$B=\b_{\l\l}\Box/(\b_\l\a_{\l\l}-\a_\l\b_{\l\l})$ by~\eqref{z1}, unique up to
the freedom $(A,B)\mapsto(A,B)+(\a_\l,\b_\l)L$.
\end{proof}

\section{The characteristic condition for dispersionless Lax pairs}\label{Sec:2}

\subsection{Symbols and the characteristic condition} In order to prove
Theorem~\ref{t:char} in full generality, we need the notions of symbol and
characteristic variety for a general PDE system. For this we use the jet
formalism. Recall from the previous section that a smooth function $F$ on
$J^\infty M$ has order $\leq \ell$ if it is a pullback from $J^\ell M$, and
that $J^\infty M$ has a canonical connection, the Cartan distribution.  The
vertical part of the $1$-form $\dd F\in \Omega^1(J^\infty M)$ may be viewed in
coordinates as a polynomial on $\pi_\infty^* T^*M$ given by
\[
\sum_{j=0}^\ell F_{(j)}\quad\text{where} \quad F_{(j)} =\sum_{|\alpha|=j}
(\p_{u_\alpha} F) \p_\alpha\quad\text{is a section of}\quad\pi_\infty^* S^j TM.
\]
The top degree term $\z_F=F_{(\ell)}$, called the (order $\ell$) \emph{symbol}
of $F$, is independent of coordinates. We assume it is nonvanishing: if it
vanishes, $F$ has order $\leq \ell-1$ and $\z_F$ has lower degree.

This generalizes to a PDE system of order $\ell$, i.e., a function $F$ of
order $\leq \ell$ on $J^\infty(M,\V)$, for some fibre bundle $\V$, with values
in a vector bundle $\W\to M$. The symbol $\z_F$ of $F$ is then a homogeneous
degree $\ell$ polynomial on $\pi_\infty^*T^*M$ with values in
$\op{Hom}(T\V,\W)$, which we assume is not identically zero, so that the PDE
system does not have order $\leq \ell-1$.  The characteristic variety of the
PDE system $\E:F=0$ is defined by~\cite{Sp}
\[
\chv^\E=\{[\te]\in\PP(\pi_\infty^* T^*M)\,|\, \z_F(\te)
\text{ is not injective}\}.
\]
If $\V$ and $\W$ have the same rank, then $[\te]$ is characteristic iff
$\z_F(\te)$ is not surjective. We take $\op{rank}(\V)=\op{rank}(\W)$ as the
definition of a determined system, although a more proper definition is
$\op{codim}\chv^\E=1$.

\begin{dfn} We say that a $2$-plane congruence $\Pi$ (or a dLp $\hat\Pi$)
is \emph{characteristic for $\E$} if for any
solution $u$ of $\E$ and any $\te$ in $\op{Ann}(\Pi)\sub \hat\pi^*T^*M_u$,
we have $[\te]\in \chv^\E$.
\end{dfn}

In the jet formalism, a dispersionless pair $\hat\Pi$ lives on a rank
$1$-bundle $\hat\pi\colon \hat M \to J^\infty(M,\V)$ (so that $\hat
M_u=(j^\infty u)^*\hat M$) and we let $\hat\pi_\infty=\pi_\infty\circ \hat\pi
\colon \hat M\to M$. A $2$-plane congruence $\Pi$ is then a rank $2$ subbundle
of $\hat\pi_\infty^* TM$, and $\hat\Pi$ is a lift of $\Pi$ to $T\hat M$. In
practice we use a spectral parameter $\l$ to trivialize $\hat M$ over
$J^\infty(M,\V)$. Then $T\hat M$ is the direct sum of the vertical bundle of
$\hat\pi$, spanned by $\p_\l$, and $\hat\pi^* TJ^\infty(M,\V)$. Thus if $\Pi$
is spanned by $X,Y\in \hat\pi_\infty^* TM$, we may write the dispersionless
pair $\hat\Pi$ as the span of $\hat X = D_X + m\,\p_\l$ and $\hat Y= D_Y+
n\,\p_\l$, where $D_X$ and $D_Y$ are total derivatives (depending also on
$\l$) and $m,n$ are functions on $\hat M$.  Then
\[
[\hat X,\hat Y] = \bigl([D_X,D_Y] + m\,D_{\p_\l Y} - n\,D_{\p_\l X}\bigr)
+ \bigl( D_X n - D_Y m + m\,\p_\l n - n\,\p_\l m\bigr)\,\p_\l.
\]
The integrability condition $[\hat X,\hat Y]\in\Gamma(\hat\Pi)$ reduces to
$[X,Y] + m\,\p_\l Y -n\,\p_\l X=\nu_X\, X+\nu_Y\, Y$, for some $\nu_X,\nu_Y$,
together with the vanishing of $D_X n - D_Y m + m\,\p_\l n - n\,\p_\l
m-\nu_X\, m-\nu_Y\, n$. As in the previous section, we may choose $X$ and $Y$
so that $\nu_X=\nu_Y=0$, and hence the Lax equation (split into the vertical
and horizontal parts) becomes the system
\begin{align}\label{main}
D_X n - D_Y m + m\, \p_\l n - n\, \p_\l m = 0,\\
\label{second}
D_X Y - D_Y X + m\, \p_\l Y - n \,\p_\l X=0.
\end{align}
We thus have a dLp for $\E$ if these equations hold modulo $\I_F$ i.e., all
components (and hence their total derivatives of arbitrary order) belong to
$\I_F$.

\begin{lem}\label{key-lemma1} If $D_X q - D_Y p$ has order $\leq k$, for
functions $p,q$ of $u_\infty\in J^\infty (M,\V)$ and sections $X,Y$ of
$\pi_\infty^* TM$, then its order $k$ symbol is
\begin{equation}\label{eq:symbol}
D_{X_{(k)}} q + D_X (q_{(k)}) +X\odot q_{(k-1)}
- D_{Y_{(k)}} p  - D_Y (p_{(k)}) -Y\odot p_{(k-1)}.
\end{equation}
If $X,Y$ are linearly independent, and $P_1$ and $P_2$ are symmetric
$k$-vectors with $X\odot P_2=Y\odot P_1$, there is a symmetric $(k-1)$-vector
$S$ with $P_1=X\odot S$ and $P_2=Y\odot S$.
\end{lem}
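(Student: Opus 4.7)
The plan is to prove the two parts independently: the symbol formula by a direct coordinate expansion via Leibniz, and the factorization statement by unique factorization in the symmetric algebra.

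\emph{First part.} I work in local coordinates $(x^i, u_\a)$ on $J^\infty(M,\V)$, writing $X = \sum_i a^i \p_i$ so that $D_X q = \sum_i a^i D_i q$ with $D_i = \p_i + \sum_\a u_{i\a}\p_{u_\a}$. Two ingredients drive the computation. The first is the symbol Leibniz rule $(fg)_{(k)} = f\cdot g_{(k)} + g\cdot f_{(k)}$, which follows directly from $\p_{u_\a}(fg) = f\p_{u_\a}g + g\p_{u_\a}f$. The second is the identity
\[
(D_i q)_{(k)} = \p_i \odot q_{(k-1)} + D_i\bigl(q_{(k)}\bigr),
\]
which I read off from the explicit form of $D_i$: the ``cross'' contribution $\p_{u_\a}(u_{i\b}\p_{u_\b}q)$ for $|\a|=k$ splits, via $\p_{u_\a}u_{i\b}=\d_{\a,i\b}$, into a piece giving $\p_i\odot q_{(k-1)}$ and a piece giving the remaining part of $D_i(q_{(k)})$. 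Applying the symbol Leibniz rule to each summand $a^i D_i q$ yields
\begin{align*}
(D_X q)_{(k)} &= \sum_i (a^i)_{(k)}\cdot D_i q + \sum_i a^i (D_i q)_{(k)} \\
&= D_{X_{(k)}} q + D_X\bigl(q_{(k)}\bigr) + X\odot q_{(k-1)},
\end{align*}
where $D_{X_{(k)}}q := \sum_i (a^i)_{(k)}\cdot D_i q$ for $X_{(k)} = \sum_i(a^i)_{(k)}\ot\p_i\in S^kTM\ot TM$, and $D_X(q_{(k)})$ denotes the coefficientwise action of $D_X$ on $q_{(k)}\in\Gamma(\pi_\infty^*S^kTM)$. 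Subtracting the analogous expansion of $(D_Y p)_{(k)}$ gives~\eqref{eq:symbol}; the hypothesis $\op{ord}(D_X q - D_Y p)\leq k$ guarantees that this degree-$k$ piece agrees with the true order-$k$ symbol.

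\emph{Second part.} This is pointwise at each $\bx\in M$, and I identify $S^jT_\bx M$ with the space of homogeneous polynomials of degree $j$ on $T^*_\bx M$, so that $\odot$ corresponds to ordinary polynomial multiplication. The ambient algebra $S^\bullet T_\bx M$ is then isomorphic to a polynomial ring in $d = \dim M$ variables, hence a UFD. The hypothesis $X\odot P_2 = Y\odot P_1$ asserts $X\mid Y\cdot P_1$ in this ring; since $X,Y$ are linearly independent linear forms, they are coprime, so $X\mid P_1$, giving $S\in S^{k-1}T_\bx M$ with $P_1 = X\odot S$. Substituting back, $X\odot(P_2 - Y\odot S) = 0$ forces $P_2 = Y\odot S$ because $S^\bullet T_\bx M$ is a domain. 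The $S$ so produced is canonical and hence patches together to a section.

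The main obstacle is notational. One has to fix a convention for the multi-index combinatorics to verify the auxiliary identity $(D_iq)_{(k)} = \p_i\odot q_{(k-1)} + D_i(q_{(k)})$ (being careful that $\p_{u_\a}u_{i\b} = \d_{\a,i\b}$ carries no extra multiplicity), and to pin down the definition of $D_{X_{(k)}}q$ as the contraction of the $TM$-factor of $X_{(k)}$ against the total-derivative operator. Once these conventions are in place, the first part is pure bookkeeping and the second is a one-line coprimality argument.
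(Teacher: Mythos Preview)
Your proof is correct and follows essentially the same route as the paper's: the first part is exactly the product rule for vertical differentiation combined with the definition of the total derivative (you have simply written out the details the paper leaves implicit), and the second part is the same polynomial divisibility argument, which the paper phrases by extending $X,Y$ to a basis so that it becomes the statement $\xi_1 P_2=\xi_2 P_1\Rightarrow P_j=\xi_j P$, while you phrase it via coprimality in the UFD $S^\bullet T_\bx M$.
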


\begin{proof} Equation~\eqref{eq:symbol} is straightforward from the
definition of the total derivative and the product rule for the vertical
differentiation. Extending $X,Y$ pointwise to a basis, the second part reduces
to the trivial observation that for any homogeneous polynomials
$P_j=P_j(\xi_1,\ldots \xi_d)$, $j=1,2$, with $\xi_1 P_2=\xi_2 P_1$, there is a
homogeneous polynomial $P$ with $P_j=\xi_j P$.
\end{proof}

\begin{lem}\label{key-lemma2}
Let~\eqref{main}--\eqref{second} have
order $\leq k+1$ modulo $\I_F$, i.e., all their higher symbols vanish modulo
$\I_F$. Then there is a symmetric $k$-tensor $S_k$ and a symmetric
$TM$-valued $k$-tensor $Q_k$ such that, modulo $\I_F$, the order $k+1$ symbols
of~\eqref{main} and \eqref{second} are respectively
\begin{gather}
\label{symbol-formula}
\begin{multlined}[c][.8\textwidth]
X\odot (n_{(k)}+D_{Q_k}n-D_Y S_k+S_k\,\p_\l n- n \,\p_\l S_k)\\
-Y\odot (m_{(k)}+D_{Q_k}m-D_X S_k+S_k\,\p_\l m- m \,\p_\l S_k),
\end{multlined}\\
\label{second-formula}
\begin{multlined}[c][.8\textwidth]
X\odot (Y_{(k)}+D_{Q_k}Y-D_Y Q_k+S_k\,\p_\l Y-n\,\p_\l Q_k)\\
- Y\odot (X_{(k)}+D_{Q_k}X-D_X Q_k+S_k\,\p_\l X-m\,\p_\l Q_k).
\end{multlined}
\end{gather}
\end{lem}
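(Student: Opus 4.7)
I plan to prove the lemma by an inductive reduction that iteratively applies both parts of Lemma~\ref{key-lemma1}, peeling off contributions from the highest-order jet variables down to order $k+1$. Write $K\geq k$ for the maximum raw jet order of $m,n,X,Y$, so that \eqref{main} and \eqref{second} have raw order $\leq K+1$; if $K\leq k$ the statement holds trivially with $S_k=Q_k=0$.

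The starting point is the symbol formula \eqref{eq:symbol} applied to \eqref{main} with $q=n$, $p=m$ and to \eqref{second} with $q=Y$, $p=X$. The vertical parts $m\,\p_\l n-n\,\p_\l m$ and $m\,\p_\l Y-n\,\p_\l X$ do not raise the jet order, so the order-$(K{+}1)$ symbols reduce to $X\odot n_{(K)}-Y\odot m_{(K)}$ and $X\odot Y_{(K)}-Y\odot X_{(K)}$ respectively. By hypothesis these vanish modulo $\I_F$, so the second (algebraic) part of Lemma~\ref{key-lemma1}, combined with the generic linear independence of $X,Y$ as sections of $\hat\pi_\infty^*TM$, supplies symmetric $(K{-}1)$-tensors $\tilde S$ (scalar-valued) and $\tilde Q$ ($TM$-valued) such that, modulo $\I_F$, $m_{(K)}=X\odot\tilde S$, $n_{(K)}=Y\odot\tilde S$, $X_{(K)}=X\odot\tilde Q$, $Y_{(K)}=Y\odot\tilde Q$.

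Substituting these expressions back into \eqref{eq:symbol} lowers the effective order by one, and I iterate the same procedure, descending through orders $K,K-1,\dots,k+2$. The cumulative corrections from each step combine into the claimed symmetric $k$-tensors $S_k$ and $Q_k$; at the final step the substitutions $m_{(k+1)}=X\odot S_k$, $n_{(k+1)}=Y\odot S_k$, $X_{(k+1)}=X\odot Q_k$, $Y_{(k+1)}=Y\odot Q_k$ are made in \eqref{eq:symbol}, and the Leibniz rule for $D_X$ together with the product rule for $\p_\l$ are applied to collect terms into the claimed form \eqref{symbol-formula} and \eqref{second-formula}.

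I expect the main obstacle to be bookkeeping. The bracketed adjustments inside $X\odot(\cdot)$ and $Y\odot(\cdot)$ in \eqref{symbol-formula}--\eqref{second-formula} are precisely the vertical ($\p_\l$) components of the commutators $[D_{Q_k}+S_k\p_\l,\,D_Y+n\p_\l]$ and $[D_{Q_k}+S_k\p_\l,\,D_X+m\p_\l]$ on $\hat M$: for instance $D_{Q_k}n-D_Y S_k+S_k\p_\l n-n\p_\l S_k$ is exactly the $\p_\l$-coefficient of the first of these. The technical core is therefore verifying that, after the algebraically derived substitutions for the top symbols, the contributions from $D_X(n_{(k+1)})$, $D_{X_{(k+1)}}n$, the vertical cross-terms, and their analogues in \eqref{second} assemble precisely into this commutator pattern, with the correct signs and $\odot$-structure.
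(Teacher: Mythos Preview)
Your plan is the same descending induction the paper carries out, and your commutator reading of the bracketed terms is correct and a useful organizing principle. One point to sharpen: the tensors $S_k,Q_k$ are not accumulated across the descent; at each level the algebraic half of Lemma~\ref{key-lemma1} factors the \emph{entire} bracketed expressions in the analogue of \eqref{symbol-formula}--\eqref{second-formula}, not the bare symbols. Thus at the step producing the order-$(k{+}1)$ formula the substitution is
\[
m_{(k+1)}+D_{Q_{k+1}}m-D_XS_{k+1}+S_{k+1}\,\p_\l m-m\,\p_\l S_{k+1}\equiv X\odot S_k \pmod{\I_F}
\]
(and its three companions), not simply $m_{(k+1)}=X\odot S_k$. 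When you feed these into \eqref{eq:symbol}, the surplus $S_{k+1},Q_{k+1}$ terms recombine into expressions built from the left sides of \eqref{main}--\eqref{second} themselves---for instance $(D_{Q_{k+1}}+S_{k+1}\p_\l)$ applied to \eqref{main}, and \eqref{second} symmetrized with $S_k$---and hence vanish modulo $\I_F$; this is precisely the cancellation you flag as bookkeeping.
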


\begin{proof} Suppose that $X,Y,m,n$ depend only on the $N$-jet of $u$ for
some $N\in\N$, so that~\eqref{main}--\eqref{second} have order $\leq N+1$, and
it suffices to prove the lemma for $k\leq N$. We thus induct on $p=N-k$.  For
$p=0$, the order $k+1=N+1$ symbols of~\eqref{main} and~\eqref{second} are
simply $X\odot n_{(k)} - Y\odot m_{(k)}$ and $X\odot Y_{(k)} - Y\odot X_{(k)}$
by~\eqref{eq:symbol}, so we are done, with $S_k=0=Q_k$.

Now suppose that the lemma holds with $k=N-p$ for some $p\geq0$, and suppose
that~\eqref{main}--\eqref{second} have order $\leq k$ modulo $\I_F$.
Then~\eqref{main} certainly has order $\leq k+1$ modulo $\I_F$, and so the
inductive hypothesis implies its order $k+1$ symbol, which vanishes modulo
$\I_F$, is given by~\eqref{symbol-formula}. Hence Lemma~\ref{key-lemma1}
produces a symmetric $(k-1)$-tensor $S_{k-1}$ such that, modulo $\I_F$,
\begin{align*}
m_{(k)}&= X\odot S_{k-1} - D_{Q_k}m + D_X S_k - S_k\,\p_\l m + m \,\p_\l S_k,\\
n_{(k)}&= Y\odot S_{k-1} - D_{Q_k}n + D_Y S_k - S_k\,\p_\l n + n \,\p_\l S_k.
\end{align*}
Similarly, by~\eqref{second-formula}, there is a symmetric $TM$-valued
$(k-1)$-tensor $Q_{k-1}$ such that
\begin{align*}
X_{(k)}&= X\odot Q_{k-1} - D_{Q_k}X + D_X Q_k - S_k\,\p_\l X + m\,\p_\l Q_k,\\
Y_{(k)}&= Y\odot Q_{k-1} - D_{Q_k}Y + D_Y Q_k - S_k\,\p_\l Y + n\,\p_\l Q_k,
\end{align*}
modulo $\I_F$. By~\eqref{eq:symbol}, the order $k$ symbol of~\eqref{main} is
\begin{multline*}
D_{X_{(k)}} n + D_X (n_{(k)}) + X\odot n_{(k-1)}
- D_{Y_{(k)}} m - D_Y (m_{(k)}) - Y\odot m_{(k-1)}\\
+ m_{(k)}\,\p_\l n + m \,\p_\l (n_{(k)})
- n_{(k)}\,\p_\l m - n \,\p_\l (m_{(k)}).
\end{multline*}
Hence, substituting for $X_{(k)}, Y_{(k)}, m_{(k)}, n_{(k)}$, we have
\begin{multline*}
  D^{\vphantom{g}}_{X\odot Q_{k-1} - D_{Q_k}X + D_X Q_k - S_k\,\p_\l X + m\,\p_\l Q_k} n
- D^{\vphantom{g}}_{Y\odot Q_{k-1} - D_{Q_k}Y + D_Y Q_k - S_k\,\p_\l Y + n\,\p_\l Q_k} m\\
- D_Y (X\odot S_{k-1} - D_{Q_k}m + D_X S_k - S_k\,\p_\l m + m \,\p_\l S_k)\\
+ D_X (Y\odot S_{k-1} - D_{Q_k}n + D_Y S_k - S_k\,\p_\l n + n \,\p_\l S_k)\\
+ (X\odot S_{k-1} - D_{Q_k}m + D_X S_k - S_k\,\p_\l m + m \,\p_\l S_k)\,\p_\l n\\
- (Y\odot S_{k-1} - D_{Q_k}n + D_Y S_k - S_k\,\p_\l n + n \,\p_\l S_k)\,\p_\l m\\
- n \,\p_\l (X\odot S_{k-1} - D_{Q_k}m + D_X S_k - S_k\,\p_\l m + m \,\p_\l S_k)
+ \rlap{$X\odot n_{(k-1)}$}\\
+ m \,\p_\l (Y\odot S_{k-1} - D_{Q_k}n + D_Y S_k - S_k\,\p_\l n + n \,\p_\l S_k)
- Y\odot m_{(k-1)}.
\end{multline*}
A lot of cancellation now occurs to leave
\begin{multline*}
  X\odot (n_{(k-1)} + D_{Q_{k-1}}n - D_YS_{k-1} + S_{k-1}\,\p_\l n - n\,\p_\l S_{k-1})\\
- Y\odot (m_{(k-1)} + D_{Q_{k-1}}m - D_XS_{k-1} + S_{k-1}\,\p_\l m - m\,\p_\l S_{k-1})\\
+ (D_XY -D_YX +m \,\p_\l Y-n\,\p_\l X) \odot S_{k-1}
+ (D_Xn - D_Ym + m\,\p_\l n - n\,\p_\l m)\,\p_\l S_k\\
+ D_{D_XY - D_Y X + m \,\p_\l Y - n \,\p_\l X} S_k
- (D_{Q_k} + S_k\,\p_\l) (D_X n - D_Y m + m\, \p_\l n - n\, \p_\l m)
\end{multline*}
and the last two lines vanish modulo $\I_F$, which
establishes~\eqref{symbol-formula} for $k'=N-(p+1)=k-1$.  We turn now to the
order $k$ symbol of~\eqref{second}, which, by~\eqref{eq:symbol}, is
\begin{multline*}
D_{X_{(k)}} Y + D_X(Y_{(k)}) + X\odot Y_{(k-1)}
- D_{Y_{(k)}} X - D_Y(X_{(k)}) - Y\odot X_{(k-1)}\\
+ m_{(k)} \, \p_\l Y + m\,\p_\l Y_{(k)}
- n_{(k)} \,\p_\l X - n\,\p_\l X_{(k)}.
\end{multline*}
Hence, substituting for $X_{(k)}, Y_{(k)}, m_{(k)}, n_{(k)}$, we have, modulo $\I_F$,
\begin{align*}
0&= D_{X\odot Q_{k-1} - D_{Q_k}X + D_X Q_k - S_k\,\p_\l X + m\,\p_\l Q_k} Y
- D_{Y\odot Q_{k-1} - D_{Q_k}Y + D_Y Q_k - S_k\,\p_\l Y + n\,\p_\l Q_k} X\\
&\qquad- D_Y(X\odot Q_{k-1} - D_{Q_k}X + D_X Q_k - S_k\,\p_\l X + m\,\p_\l Q_k)\\
&\qquad+ D_X(Y\odot Q_{k-1} - D_{Q_k}Y + D_Y Q_k - S_k\,\p_\l Y + n\,\p_\l Q_k)\\
&\qquad
+ (X\odot S_{k-1} - D_{Q_k}m + D_X S_k - S_k\,\p_\l m + m \,\p_\l S_k) \, \p_\l Y\\
&\qquad
- (Y\odot S_{k-1} - D_{Q_k}n + D_Y S_k - S_k\,\p_\l n + n \,\p_\l S_k ) \,\p_\l X\\
&\qquad
- n\,\p_\l ( X\odot Q_{k-1} - D_{Q_k}X + D_X Q_k - S_k\,\p_\l X + m\,\p_\l Q_k )
+ \rlap{$X\odot Y_{(k-1)}$}\\
&\qquad
+ m\,\p_\l ( Y\odot Q_{k-1} - D_{Q_k}Y + D_Y Q_k - S_k\,\p_\l Y + n\,\p_\l Q_k )
- Y\odot X_{(k-1)}\displaybreak[0]\\
&= X\odot(Y_{(k-1)} + D_{Q_{k-1}}Y - D_YQ_{k-1} + S_{k-1}\,\p_\l Y- n\,\p_\l  Q_{k-1})\\
&\qquad
- Y\odot(X_{(k-1)} + D_{Q_{k-1}} X - D_XQ_{k-1}+ S_{k-1}\,\p_\l X - m\,\p_\l Q_{k-1})\\
&\qquad+( D_X Y - D_Y X+ m\,\p_\l Y- n\,\p_\l X)\odot Q_{k-1}\\
&\qquad+ (D_Xn-D_Ym+ m\,\p_\l n- n\,\p_\l m)\,\p_\l Q_k,\\
&\qquad+ D_{D_XY-D_YX+m\,\p_\l Y-n\,\p_\l X} Q_k
- (D_{Q_k}+S_k\,\p_\l)(D_XY-D_YX+ m \, \p_\l Y -n\,\p_\l X)
\end{align*}
and the last two lines again vanish modulo $\I_F$, so
that~\eqref{second-formula} holds for $k'=N-(p+1)=k-1$, completing the proof.
\end{proof}

\subsection{Proof of Theorem~\textup{\ref{t:char}}} The strategy is to find
a dispersionless pair $\E$-equivalent to $\hat\Pi$ whose integrability
condition has minimal order.

We may assume as above that $\hat\Pi$ is spanned by vector fields
$D_X+m\,\p_\l$ and $D_Y+n\,\p_\l$ which commute on shell, where $X,Y,m,n$
depend only on the $N$-jet of $u$ for some $N\in\N$, and
that~\eqref{main}--\eqref{second} have order $\leq k+1$, modulo the ideal
$\I_F$ generated by $F$ and its total derivatives, for $\ell-1\leq k\leq N$.
By the definition of a dLp, these equations have the form $\Lambda_1(F)=0$
and $\Lambda_2(F)=0$, where $\Lambda_1$ and $\Lambda_2$ are $\l$-dependent
operators in total derivatives, the latter being $TM$-valued. In local
coordinates we may write $\Lambda_1$ as a finite sum $\sum_\alpha
b_\alpha(u_\infty,\l) D_\alpha$, and then the symbol of $\Lambda_1(F)$ of
any order $r\geq \ell+1$ is
\[
\Lambda_1(F)_{(r)}=\sum_{j=0}^\ell \sum_{|\alpha|= r - j} b_\alpha(u_\infty,\l)
\p_\alpha \odot F_{(j)}\mod \I_F.
\]
Since the order $r$ symbol vanishes modulo $\I_F$ for $r\geq k+2$, we deduce,
starting from $r=\max\{|\alpha|:b_\alpha\neq 0\}+\ell$, that
$b_\alpha=0\mod\I_F$ for $|\alpha|\geq k-\ell+2$ and that, for $k\geq \ell$, the
order $k+1$ symbol has the form $L_1\odot \z_F$ modulo $\I_F$, where
$\z_F=F_{(\ell)}$ and $L_1$ is a symmetric $(k-\ell+1)$-vector depending on
$(u_\infty,\l)$; this also holds straightforwardly when $k=\ell-1$.
Similarly, for any $k\geq \ell-1$, the order $k+1$ symbol of $\Lambda_2(F)$
has the form $L_2\odot \z_F$ modulo $\I_F$ for a $TM$-valued symmetric
$(k-\ell+1)$-vector depending on $(u_\infty,\l)$.

By Lemma~\ref{key-lemma2}, these symbols have the
form~\eqref{symbol-formula}--\eqref{second-formula} modulo $\I_F$. Hence, on
any solution $u$ and for any $\te\in \op{Ann}(\Pi)$, we have
$L_1(\te)\circ\z_F(\te)=0$ and $L_2(\te)\circ\z_F(\te)=0$ (there
is only one independent $\te$ at each point for $d=3$ and a pair for
$d=4$).

For $k=\ell-1$, $(L_1,L_2)$ is a nonzero $(d+1)$-vector-valued function of
$(u_\infty,\l)$. Hence $\z_F(\te)=0$ for all $\te\in \op{Ann}(\Pi)$
and we are done. We may thus induct on $k\geq\ell-1$, and suppose that the
result holds when~\eqref{main}--\eqref{second} have order $\leq k$.  We either
have $L_1(\te)=0$ and $L_2(\te)=0$ as polynomials in
$\te\in\op{Ann}(\Pi)$, or that $\z_F(\te)$ is not surjective for all
such $\te$. In the latter case, we are done, since the PDE is
determined. The former case implies that $L_1=X\odot T_1-Y\odot U_1$ and
$L_2=X\odot T_2-Y\odot U_2$ for some symmetric $(k-\ell)$-vectors
$T_1,U_1,T_2,U_2$ (the latter pair being $TM$-valued).

We now let $\tau_1,\upsilon_1,\tau_2,\upsilon_2$ be order $k-\ell$ operators
in total derivatives such that $\tau_1 F$ has order $k$ symbol $T_1\odot\z_F$
modulo $\I_F$ and so on: concretely, in local coordinates, if
$T_1=\sum_{|\alpha|=k-\ell} t_\alpha(u_\infty,\l) \p_\alpha$, we may take
$\tau_1 = \sum_{|\alpha|=k-\ell} t_\alpha(u_\infty,\l) D_\alpha$. We then
modify the dispersionless pair by $m\mapsto m-\upsilon_1 (F)$, $n\mapsto
n-\tau_1 (F)$, $X\mapsto X-\upsilon_2 (F)$, $Y\mapsto Y-\tau_2 (F)$. This
modification is $\E$-equivalent to $\hat\Pi$, but the new order $k+1$ symbols
of~\eqref{main}--\eqref{second} vanish modulo $\I_F$, so they have order $\leq
k$ modulo $\I_F$, and the result follows by the inductive hypothesis.
\qed

\subsection{Dispersionless pairs characteristic for a quadric}\label{S:Q}

If $\hat\Pi$ is a dLp for an equation $\E$ whose characteristic variety
$\chv^\E$ is a quadric, then $\Pi$ is coisotropic for this quadric by
Theorem~\ref{t:char}. In this section we investigate the extent to which $\Pi$
recovers this quadric. We begin with a uniqueness criterion, and then discuss
existence. We make essential use of the nondegeneracy
conditions~\eqref{z1}--\eqref{z2}, which imply in particular that at each
$\bx\in M_u$, the image of $\Pi_\bx\colon\l\mapsto\Pi_{(\bx,\l)}$ does not lie
in any proper projective linear subspace of $\op{Gr}_2(T_\bx M_u)
\sub\PP(\Wedge^2T_\bx M_u)$.

\begin{prop}\label{prp-u}
If a $2$-plane congruence $\Pi$ is coisotropic for $c_F$, then for any $\bx\in
M_u$ and $\l\in\hat\pi^{-1}(\bx)$ at which $\Pi_\bx$ is an immersion, it is
nondegenerate at $\bx$. Conversely, at any point $\bx$ where $\Pi$ is
nondegenerate, there is at most one \textup(quadratic\textup) conformal
structure $c_F$ on $T_{\bx} M_u$ with $\Pi_{(\bx,\l)}$ coisotropic for all
$\l$, and it must be nondegenerate and hyperbolic.
\end{prop}

\begin{proof}
Suppose first that $d=3$, so that $\op{Gr}_2(T_\bx M_u)\cong
\PP(T^*_{\bx}M_u)$ is a projective plane, and $\op{Ann}(\Pi_{\bx})$ is a curve
in this plane. If $\Pi$ is coisotropic, then $\op{Ann}(\Pi_{\bx})$ lies on the
nonsingular conic $\{[\te]:\z_F(\te)=0\}$ and so if $\Pi_{\bx}$ is immersed,
its derivatives of order $\leq 2$ in $\l$ span $\PP(T^*_{\bx}M_u)$, hence it
is nondegenerate. Conversely, two distinct nonsingular conics meet in at most
four points, so $\op{Ann}(\Pi_{\bx})$ lies on at most one nonsingular conic
(which is nonempty, hence hyperbolic), and if $\op{Ann}(\Pi_{\bx})$ lies on a
singular conic, it lies on a line, hence $\Pi_{\bx}$ is degenerate.

Suppose instead that $d=4$, so that (the Pl\"ucker embedding of)
$\op{Gr}_2(T_\bx M_u)$ is the Klein quadric in $\PP(\Wedge^2T_{\bx}M_u)$, and
$\Wedge^2\Pi_{\bx}$ is a curve in this quadric.  If $\Pi$ is coisotropic, then
$\Wedge^2\Pi_{\bx}$ lies in a nondegenerate plane section of this quadric,
which is a conic: the corresponding lines in $\PP(T^*_{\bx}M_u)$ belong to one
of the rulings of the quadric surface $\{[\te]:\z_F(\te)=0\}$ in
$\PP(T^*_{\bx}M_u)$. In particular, if $\Pi_{\bx}$ is immersed, its tangent
does not lie in the quadric, hence it is nondegenerate.  Conversely, two
distinct nonsingular quadric surfaces meet in a degree four curve (containing
at most four lines), so if $\Pi_\bx$ is nonconstant, it lies on at most one
nonsingular quadric surface (which is hyperbolic because it contains lines),
and if $\Pi_\bx$ has image in a singular quadric surface, then the lines pass
through a point or lie in a plane, hence $\Wedge^2\Pi_\bx$ lies in a proper
projective linear subspace of $\op{Gr}_2(T_\bx M_u)$, hence $\Pi_{\bx}$ is
degenerate.
\end{proof}

\begin{prop}\label{prp-e}
Suppose $d=3$ and the nondegeneracy condition~\eqref{z1} holds. Then there is
a unique conformal structure $c$ for which the $2$-plane congruence
$\Pi=\langle X,Y\rangle$ is null for all $\l$ if and only if the Monge
invariant $I(\a,\b)=0$. This invariant has order $5$ in the entries and it
distinguishes conics in the projective plane. In the local parametrization
with $\b=\l$, this condition is the following \textup(we denote $\a'=\a_\l$
etc.\textup{):}
\begin{equation*}
I(\a,\l)=9(\a'')^2\a^{(5)}-45\a''\a'''\a^{(4)}+40(\a''')^2=0.
\end{equation*}

Suppose $d=4$ and the nondegeneracy condition~\eqref{z2} holds. Then there is
a unique conformal structure $c$ for which the $2$-plane congruence
$\Pi=\langle X,Y\rangle$ is \textup(co\textup)isotropic for all $\l$ if and
only if the following system of differential equations of order $3$ holds,
which we write in a partially integrated second order form so \textup(again
$\a'=\a_\l$ etc.\textup)
\begin{gather*}
v'w''-v''w'=k_{vw}|\a'\d'-\b'\cg'|^{3/2}\ \text{ for }v,w\in\{\a,\b,\cg,\d\},
\end{gather*}
where $k_{vw}$ are $\l$-independent and satisfy the ``cocycle conditions''
$k_{vw}+k_{wv}=0$, $u'k_{vw}+v'k_{wu}+w'k_{uv}=0$ for
$u,v,w\in\{\a,\b,\cg,\d\}$.  In the normalization $\d=\l$ these conditions
simplify to: $(\a,\b,\cg)''=\mathbf{v}\, |\a'-\b'\cg'|^{3/2}$, where
$\mathbf{v}$ is a $\l$-independent $3$-component vector.
\end{prop}

\begin{proof}
Let us discuss first the case $d=3$. We are looking for a conformal structure
$c$, represented by a pseudo-Riemannian metric $g$ of signature $(2,1)$, such
that the planes $\Pi=\langle X,Y\rangle$ are null.  Consider the Pfaffian form
$\te=\dd t+\a\,\dd x+\b\,\dd y\in\op{Ann}(\Pi)$.  The null condition is a
single equation $c(\te,\te)=0$. Adding to it its $\l$-derivatives up to
order 4, we get a system of 5 equations on 6 coefficients of the metric (5
coefficients if considered up to proportionality).  This system is solvable
iff (\ref{z1}) holds. Provided this nondegeneracy condition, we can uniquely
find $c=[g]$, but in order for it to be supported on $M_u$ (and not on $\hat
M_u$) the ratio of the coefficients of $g$ must be $\l$-independent. This is
equivalent to the condition $I(\a,\b)=0$.

Consider now the case $d=4$. Add to the 3 equations $c(X,X)=0$, $c(X,Y)=0$,
$c(Y,Y)=0$ their first and second derivatives in $\l$.  The obtained system of
9 equations on 10 coefficients of the metric (9 coefficients if considered up
to proportionality) is solvable iff condition (\ref{z2}) holds.  Provided this
nondegeneracy condition, we can uniquely find $c=[g]$, but in order for it to
be supported on $M_u$ (and not on $\hat M_u$) the ratio of the coefficients of
$g$ must be $\l$-independent. This is equivalent to the system of equations
formulated in the proposition.
\end{proof}

\section{Projective dependence on the spectral parameter}\label{Sec:3}

\subsection{Weyl connections and standard dLps}\label{S:N}

For any $2$-plane congruence $\Pi$ which is characteristic for a bundle of
nonsingular quadric hypersurfaces, there is a well-known construction of lifts
$\hat\Pi_\nabla$ of $\Pi$ from \emph{Weyl connections} $\nabla$, i.e., a
torsion-free connections preserving the conformal structure $c$ defining the
quadric. Such Weyl connections form an affine space modelled on the vector
space of $1$-forms on $M_u$.

\begin{lem}\label{l:weyl} Let $\Pi$ be a nondegenerate $2$-plane congruence
on $\hat M_u\to M_u$, characteristic for a bundle of quadric
hypersurfaces, and $\nabla$ a Weyl connection. Then $\nabla$ induces a
connection on $\hat M_u$ such that the horizontal lift
$\hat\Pi_\nabla$ of $\Pi$ is normal.
\end{lem}
\begin{proof} Since $\nabla$ is a conformal connection, it induces a
connection on the bundle of coisotropic planes for $c$, and hence on $\hat
M_u$, since $\Pi$ is an immersion. The pullback of $\nabla$ to $\hat\pi^*
TM_u$ preserves $\Pi$ and hence, since $\nabla$ is torsion-free, the
horizontal lift $\hat\Pi$ satisfies $\hat\pi_*[\hat\Pi,\hat\Pi]=\Pi$.
\end{proof}

We refer to such a lift $\hat\Pi_\nabla$ as a \emph{standard dLp}. For $d=4$,
any standard dLp $\hat\Pi_\nabla$ is the unique normal lift of
$\Pi=\hat\pi_*(\hat\Pi_\nabla)$, hence independent of the choice of Weyl
connection, as is well known~\cite{Pe}. However, for both $d=3$ and $d=4$,
standard dLps are very special because the connection induced by $\nabla$ on
$\hat M_u$ is \emph{projective}: $\hat M_u$ is locally isomorphic to a
$\PP^1$-bundle over $M_u$ and if $\l$ is a spectral parameter induced by an
affine coordinate on this projective bundle, then horizontal lifts of
($\l$-independent) vector fields on $M_u$ depend quadratically on $\l$
(because vector fields on $\PP^1$ have this form in an affine chart).

Furthermore, with respect to such a projective spectral parameter $\l$, there
is a local parametrization of vector fields spanning $\Pi$ that is linear in
$\l$, i.e., $\Pi=\langle V_1+\l V_3,V_2+\l V_4\rangle$ for $\l$-independent
vector fields $V_i$ on $M_u$, so that their lifts are cubic in $\l$.

When $d=4$, these properties follow from the existence of an adapted
frame $V_1,V_2,V_3,V_4$ for $M_u$ such that in the dual coframe
$\te_1,\te_2,\te_3,\te_4$, the conformal structure is represented by
$g=\te_1\te_4-\te_2\te_3$. Then (up to a choice of orientation)
$\Pi=\langle V_1+\l V_3,V_2+\l V_4\rangle$, $\Delta=\langle V_1+\l
V_3,V_2+\l V_4,\p_\l\rangle$ and it is straightforward to verify that
the unique normal lift of $\Pi$ is
\[
\hat{\Pi}=\langle V_1+\l V_3+m\p_\l,V_2+\l V_4+n\p_\l\rangle,
\]
where the coefficients $m,n$ are given in terms of the structure functions
$c_{ij}^k=\te_k([V_i,V_j])$ of the frame as
\begin{align*}
m&=-c_{12}^4+\l(c_{23}^4-c_{14}^4+c_{12}^2)-\l^2(c_{23}^2-c_{14}^2+c_{34}^4)
+\l^3c_{34}^2, \\
n&=c_{12}^3-\l(c_{23}^3-c_{14}^3+c_{12}^1)+\l^2(c_{23}^1-c_{14}^1+c_{34}^3)-\l^3c_{34}^1.
\end{align*}
These are cubic in $\l$ as required, and compatible with the
representation $m=m_1+\l m_3$ and $n=m_2+\l m_4$ for coefficients
$m_i$ of $\p_\l$ in the lifts of $V_i$ that are quadratic in $\l$.

When $d=3$, there is similarly an adapted frame $V_0,V_1,V_2$ on $M_u$ with
the dual coframe $\te_0,\te_1,\te_2$ such that conformal structure $c_F$ is
represented by the Lorentzian metric $g=4\te_0\te_2-\te_1^2$ and $\Pi=\langle
V_0+\l V_1,V_1+\l V_2\rangle=\ker\te(\l)$, where
\begin{equation}\label{eq:thl}
\te(\l)=\te_2-\l\te_1+\l^2\te_0
\end{equation}
for a (projective) spectral parameter $\l$. We then have the following
fact (cf.~\cite{DMT}).

\begin{lem}\label{lem3D}
Let $d=3$ and let $\Pi$ be as in Lemma~\textup{\ref{l:weyl}}. Then Weyl
connections parametrize projective normal lifts $\hat\Pi$ of $\Pi$.
\end{lem}
\begin{proof} By definition any projective lift given by~\eqref{XYhat},
with $X=V_0+\l V_1,Y=V_1+\l V_2$ affine linear, has $m,n$ cubic in $\l$, i.e.,
$m=\sum_{i=0}^3m_i\l^i$, $n=\sum_{i=0}^3n_i\l^i$.  Now $\op{Ann}(\Pi)$ is
spanned by the $1$-form~\eqref{eq:thl} where $\te_i(V_j)=\d_{ij}$. Hence
$\te(\hat\pi_*[\hat X,\hat Y])= \te([X,Y]-nV_1+mV_2)$ is a quartic
polynomial in $\l$ determining $5$ of the $8$ coefficients of $m$ and $n$.  It
is straightforward to check that remaining three coefficients are determined
uniquely by the Weyl connection (a $1$-form has three components at each
point).
\end{proof}

\subsection{The modified Manakov--Santini master equation in 3D}

As mentioned in the introduction, the integrability for a standard dLp
$\hat\Pi_\nabla$ in 3D is well-known to be equivalent to the EW equation on
$(c,\nabla)$ and has the geometric interpretation that any EW manifold locally
admits (many) foliations by totally geodesic null surfaces~\cite{Cartan,Hi}
(corresponding to curves in the minitwistor space). We now use this to obtain
an alternative derivation of the Manakov--Santini system~\cite{MS} as a master
equation in 3D, or rather a modification of this system which was previously
derived in~\cite{DFK} by a different method.

Any totally geodesic null surface has a canonical foliation by null geodesics,
so any EW manifold admits a local coordinate system $(x,y,t)$, where $x$ and
$y$ are pulled back from local coordinates on the local leaf spaces of a
totally geodesic null surface foliation and the induced null geodesic
foliation respectively. Thus $\p_t$ is null and orthogonal to $\p_y$ and we
can use the freedom in the $t$ coordinate so that the conformal structure has
a representative metric
\begin{equation}\label{eq:EWg}
g= 4 (\dd t - b\, \dd x) \dd x - (\dd y - a\, \dd x)^2
\end{equation}
for some functions $a$ and $b$. This has the form $4\te_0\te_2-\te_1^2$, where
$\te_0=\dd x$, $\te_1=-\dd y + a\, \dd x$, $\te_2=\dd t - b\, \dd x$ is the
coframe dual to $V_0=\p_x + a \p_y + b \p_t$, $V_1=-\p_y$ and $V_2=\p_t$.
Thus the null $2$-plane congruence $\Pi=\langle V_0+\l V_1, V_1+\l V_2\rangle$
is the kernel of
\begin{equation}\label{thla}
\te(\l) = (\dd t - b\,\dd x) + \l (\dd y - a\, \dd x) + \l^2 \dd x
= \dd t + \l \,\dd y + (\l^2 - a\l - b) \dd x,
\end{equation}
and is equal to $W_0^\perp$ where
\begin{equation}\label{eq:W0MS}
W_0=V_0+2\l V_1+ \l^2 V_2 = \p_x + a \p_y + b \p_t -2 \l \p_y + \l^2 \p_t
=\p_x+(a-2\l)\p_y+(b+\l^2)\p_t.
\end{equation}
Since $\p_y$ and $\p_t$ are tangent to level surfaces of $x$, which are the
null surfaces corresponding to $\l=\infty$, the standard dLp must have the
form $\hat\Pi_\nabla=\langle V_0+\l V_1+m'\p_\l, V_1+\l V_2+n'\p_\l\rangle $
where $m'$ and $n'$ are quadratic in $\l$.

To obtain a $2$-plane congruence in the form~\eqref{XY3}, we let 
$X= V_0+\l V_1 + (a-\l)(V_1+\l V_2)$ and $Y=-(V_1+\l V_2)$ so that
$\hat X$ and $\hat Y$ are given by~\eqref{XYhat} with $m=m' + (a-\l)n'$ and
$n=-n'$. The Lax integrability condition $[\hat X,\hat Y]=0$ implies $n'$ is
affine linear in $\l$, while $m'$ is a quadratic in $\l$, where the
coefficient $h$ of $\l^2$ is a function of $x$ and $y$. We may set $h$ to zero
using the coordinate freedom
\[
x\mapsto x,\quad y\mapsto \rho(x,y),\quad t\mapsto \rho_y(x,y)^2t,\quad
\l\mapsto \rho_y(x,y)(\l-2\rho_{yy}(x,y)t),
\]
which preserves the form of $\te(\l)$ (hence also $g$) up to rescaling by
$\rho_y(x,y)^2$ and a redefinition of $a$ and $b$. The Lax equation now
implies that the $\l$ coefficient of $m'$ differs from $-a_y$ by a function of
$x$ and $y$ which may be set to zero using the remaining coordinate freedom
\[
x\mapsto x,\quad y\mapsto y,\quad t\mapsto t+\tau(x,y),\quad \l\mapsto
\l-\tau_y(x,y).
\]
We then find that $m'=-a_y\l -b_y$, $n'=a_t \l + b_t$, and hence
\begin{align*}
\hat X &=
\p_x + (-\l^2 + a \l+ b) \p_t - ((a_y\l + b_y)+(\l-a)(a_t \l+b_t))\p_\l,\\
\hat Y &= \p_y - \l \p_t - (a_t \l+b_t)\p_\l.
\end{align*}
The Lax integrability condition now reduces to the determined system
\begin{equation}\label{eq:EW}
  (a_x-aa_y +ba_t)_t = (a_y -2 a a_t)_y,\qquad
  (b_x-ab_y +bb_t)_t = (b_y -2 a b_t)_y.
\end{equation}
This is the form of the EW system given in~\cite[(11)--(12)]{DFK}, except that
the $x$ and $t$ variables have been swapped in our conventions and we have
used the identity $(aa_y)_t=(aa_t)_y$. Substituting $a=v_t$ and $b=u-v_y$
gives the Manakov--Santini system. The modified version~\eqref{eq:EW} may
also be written more geometrically as
\begin{equation*}
\Delta^g a =0, \qquad \Delta^g b +\tfrac3 2\{a,b\}_P = 0,
\end{equation*}
where $\Delta^g$ is the Laplacian of the metric $g$ in~\eqref{eq:EWg}, and
$\{a,f\}_P = a_y f_t - a_t f_y$ is the Poisson bracket with respect to the
bivector field $P=\p_y\wedge\p_t$ tangent to the null surface foliation.

\begin{rk} In \cite{DFK} a translationally noninvariant version of the MS
system was also derived and the question of an explicit equivalence to the
standard MS system was raised. However, the translationally noninvariant
version is obtained from a generic null surface foliation of the EW manifold,
and the coordinate transformation to a totally geodesic null surface foliation
will be transcendental in general.
\end{rk}

\subsection{Arbitrary lifts of $2$-plane congruences in 3D}

We showed in Proposition~\ref{p:normal} that any dLp can be made normal.
However, when $d=3$, the normal lift of a $2$-plane congruence $\Pi$ is not
unique. Instead, the rank $3$ distribution $\Delta=\hat{\pi}_*^{-1}(\Pi)\sub T\hat M_u$ 
has a unique Cauchy characteristic: a rank $1$ subbundle $\cch\sub\Delta$ with
$[\cch,\Delta]=\Delta$.  For a rank $2$ subbundle $\hat{\Pi}\sub\Delta$ the
normality condition $[\hat{\Pi},\hat{\Pi}]\sub\Delta$ implies that $\cch\sub
\hat{\Pi}$, but one generator of $\hat\Pi$ remains undetermined. In the case
of interest that $\Pi=W_0^\perp$ is characteristic for a quadric, an easy
computation shows that $\cch$ is spanned by the vector field
\[
\hat{W}=W_0+\z\p_\l,\qquad W_0:=V_0+2\l V_1+\l^2V_2,
\]
where, using the structure functions $c_{ij}^k=\te_k([V_i,V_j])$ of the
adapted frame $V_0,V_1,V_2$, we have
\begin{equation}\label{sigma}
\z=-c_{01}^2+\l(c_{01}^1-c_{02}^2)-\l^2(c_{12}^2-c_{02}^1+c_{01}^0)
+\l^3(c_{12}^1-c_{02}^0)-\l^4c_{12}^0.
\end{equation}
These formulae are compatible with representation $\z=m_0+2\l m_1+\l^2m_2$ for
the coefficients $m_i$ of the lifts of $V_i$ which we want to show can be
chosen quadratic in $\l$.

Without loss of generality we may write $\hat{\Pi}=\langle
\hat{W},\hat{U}\rangle$ with
\begin{equation}\label{hatU}
\hat{U}=W_1+\psi\p_\l, \qquad W_1=\tfrac12 (W_0)_\l=V_1+\l V_2.
\end{equation}
We also write $W_2=V_2=(W_1)_\l$. The nondegeneracy condition~\eqref{eq:ndg}
on $\Pi$ implies that $W_0,W_1,W_2$ form a ($\l$-dependent) frame for $TM_u$
and indeed
\[
W_0\odot W_2-W_1^2=V_0\odot V_2-V_1^2
\]
is the inverse metric to $g=4\te_0\te_2-\te_1^2$, which is nondegenerate and
independent of $\l$.

The Frobenius integrability condition $[\hat{\Pi},\hat{\Pi}]=\hat{\Pi}$ is
the condition that
\begin{equation*}
[\hat{W},\hat{U}]=[V_0,V_1]+\l[V_0,V_2]+\l^2[V_1,V_2]
+\sigma V_2-2\psi(V_1+\l V_2)+(\hat{W}(\psi)-\hat{U}(\sigma))\p_\l
\end{equation*}
is a section of $\hat\Pi$.  Identifying $V_1\equiv-\l V_2-\psi\p_\l$,
$V_0\equiv\l^2V_2+(2\l\psi-\sigma)\p_\l$ modulo $\hat{\Pi}$, and assuming that
the lift is normal, this reduces to $\normeq=0$, where
\begin{equation}\label{eqK}
\begin{split}
\normeq&:=(W_0+q_1+\sigma\p_\l)\psi+2\psi^2-\hat q_0,
\qquad \hat q_0=W_1\,\sigma+q_0\,\sigma,\\
q_1&=c_{02}^2-2c_{01}^1+\l(2c_{12}^2-3c_{02}^1+4c_{01}^0)
-\l^2(4c_{12}^1-5c_{02}^0)+6\l^3c_{12}^0,\\
q_0&=c_{01}^0+\l c_{02}^0+\l^2c_{12}^0.
\end{split}\end{equation}
Using the coefficients of the decomposition $[W_0,W_1]=\bar{c}_{01}^0
W_0+\bar{c}_{01}^1 W_1+\bar{c}_{01}^2 W_2$ we get
\begin{equation}\label{barsigma}
\sigma=-\bar{c}_{01}^2,\qquad q_1=-\bar{c}_{01}^1-\sigma_\l,\qquad
q_0=\bar{c}_{01}^0.
\end{equation}
Note that $\deg_\l \bar{c}_{01}^0=2$, $\deg_\l \bar{c}_{01}^1=3$ and $\deg_\l
\bar{c}_{01}^2=4$.

\begin{ex}[dKP] For the dKP equation~\eqref{dKP}, we have 
$g=\dd y^2-4\dd x\,\dd t+4u\,\dd x^2$, 
$\te=\dd t+\l\,\dd y+(\l^2-u)\,\dd x$ and
\[
\hat{W} = \p_x - 2\l\p_y + (\l^2+u)\p_t + (\l u_t-u_y)\p_\l,\qquad
\hat{U} = -\p_y + \l\p_t + \psi\p_\l,
\]
whence $\normeq=\psi_x-2\l\psi_y+(\l^2+u)\psi_t+(\l u_t-u_y)\psi_\l
-u_{yy}+2\l u_{yt}-\l^2u_{tt}-\psi u_t+2\psi^2$. In this case, via
the change of variables $\psi=\vp^{-1}+u_t$, the equation $\normeq=0$ 
\eqref{eqK} takes the linear inhomogeneous form
\begin{align}\label{Leq-dKP}
 &\cL_-(\vp)=2 \ \Leftrightarrow \ \cL_+(\vp^{-1})=-2\vp^{-2},\qquad\\
 &\text{where } \ \cL_\pm=\p_x-2\l\p_y+(\l^2+u)\p_t+(\l u_t-u_y)\p_\l \pm 3u_t.\nonumber
\end{align}
If we assume $\psi$ either local ($=$ differential) in $u$ or global ($=$
algebraic) in $\l$, then the only solution is $\vp^{-1}=0$, implying the
existence of a unique dLp of these types.

However, there exist solutions to~\eqref{Leq-dKP} which are non-algebraic in
$\l$ and nonlocal in $u$. Indeed for any Cauchy data $u|_{t=0}$ that is
non-algebraic in $\l$, we obtain such a solution.  In this way we obtain a
(characteristic but not projective or local) Lax pair that does not give rise
to an EW structure.  Moreover, there is no uniqueness for such Lax pairs.
\end{ex}

In the following Sections~\ref{Sec:32}--\ref{Sec:34} we deduce the cubic
behaviour of $\psi$ in $\l$ from the equation $\normeq=0$, 
a strengthened nondegeneracy condition, and the requirement that the dLp is local in $u$. 
This suffices to establish the projective property, and hence Theorem~\ref{t:proj}.

\subsection{Scalar PDEs in 3D}\label{Sec:32}

We first consider the case of a scalar differential equation $\E:F=0$ of order
$\ell$, i.e., one PDE~\eqref{F-ell} on one function $u$.  As before, we assume
that the characteristic variety $\chv^\E$ is a quadric, which implies that
$\ell$ is even and the symbol $F_{(\ell)}$ of the differential operator is a
power of a nondegenerate quadratic form: $\ell=2m$, $F_{(\ell)}=Q^m$ for some
$Q\in\Gamma(S^2TM_u)$ on $\E$.  (For a second order scalar PDE \eqref{F} we
get $m=1$.) Using the notation of the previous section, we have $Q=W_0\odot
W_2-W_1^2$.

The order of the conformal structure $c_F$ in $u$ satisfies
$k=\op{ord}(c_F)\leq\ell$, and the strict inequality is possible, for
instance, when $F$ is quasilinear (dKP is an example with $0=k<\ell=2$). Then
the frame and coframe $V_i$ and $\te_i$ can be chosen to have the same order
$k$ in $u$, while the structure functions $c_{\smash{ij}}^t$ and the coefficient
$\sigma$ in~\eqref{sigma} have order $\leq k+1$.

Let us suppose $\hat{\Pi}$ is a normal dLp for $\E$. We want to find an
$\E$-equivalent dLp which is projective. Since $\hat\Pi$ is normal, we may
suppose, as in the previous section, that its integrability condition is
$\normeq = 0$ with $\normeq$ given by~\eqref{eqK}. Hence by definition of a
dLp, $\mathfrak{e}=\Box F$ for some operator $\Box$ in total derivatives. If
$\psi$ has order $r\geq k+2$ then by taking the $(r+1)$-symbol of this
equation we obtain
\[
W_0\odot \psi_{(r)}=\mathfrak{e}_{(r+1)}=\Box_{(r+1-\ell)}\odot F_{(\ell)}
= \Box_{(r+1-2m)}\odot(W_0\odot W_2-W_1^2)^m,
\]
and hence conclude (since $W_0\odot W_2-W_1^2$ is indivisible by $W_0$) that
the symbol of $\psi$ is divisible by that of $F$. Therefore we can modify
$\psi$ off shell (fixed on shell) to obtain an $\E$-equivalent dLp in which
the new $\psi$ has order $<r$. By iterating this process, we may thus assume,
up to $\E$-equivalence, that $\psi$ has order $\leq k+1$ from the outset.

The $(k+2)$-symbol of $\normeq=\Box F$ now yields, using equation~\eqref{eqK},
the relation
\begin{equation}\label{zaq}
W_0\odot \psi_{(k+1)}-W_1\odot \sigma_{(k+1)}=R\odot(W_0\odot W_2-W_1^2)^m
\end{equation}
for a section $R\in\Gamma(S^{k-2m+2}TM_u)$ of the bundle of homogeneous degree
$k-2m+2$ polynomials on $T^*M_u$, i.e., $R=\sum_{|\tau|=k-2m+2}a_\tau W_\tau$,
where we let $W_\tau=W_{j_1}\odot\cdots\odot W_{j_t}$ for a multi-index
$\tau=(j_1\cdots j_t)$ of length $|\tau|=t$.  By modification of $\psi$ and
$\sigma$ off shell, we can bring this function to the form
\begin{equation}\label{nR}
R=(-1)^{m-1}\mu\, W_2^{k-2m+2}.
\end{equation}
Formula \eqref{zaq} then implies that
\begin{equation}\label{psisigma}
\sigma_{(k+1)}=W_1\odot R\odot Q^{m-1}+W_0\odot T,\qquad
\psi_{(k+1)}=W_2\odot R\odot Q^{m-1}+W_1\odot T
\end{equation}
for some $T\in\Gamma(S^kTM_u)$, and by the normalization~\eqref{nR}, the
coefficients for $R$ and $T$ are uniquely determined by independent components
of $\sigma$ and hence they are polynomial in $\l$. In particular, since
$\sigma$ is a quartic polynomial in $\l$, we conclude that $\mu\in
C^\infty(J^{k+1}M_u)$ is a polynomial in $\l$ with $\deg_\l\mu\leq3$.

Also, $T$ is a polynomial in $\l$ with $\deg_\l T\leq2$. Therefore,
$\psi_{(k+1)}$ is a cubic polynomial in $\l$.  Thus, there exists a function
$\psi_1=\psi_1(\p^{k+1}u,\l)$ with $\deg_\l\psi_1\leq3$ such that
$\psi_0:=\psi-\psi_1$ has order $\leq k$ in $u$.  Substituting
$\psi=\psi_1+\psi_0$ into the equation $\normeq=\Box F$ we get
\begin{equation}\label{psi0}
(W_0+\tilde{q}_1+\sigma\p_\l)\psi_0+2\psi_0^2=\tilde{q}_0,
\end{equation}
where $\tilde{q}_1=q_1+4\psi_1$, while $\tilde{q}_0$ is a expression of order
$k+1$ in $u$ that we do not write explicitly. However, it follows from
\eqref{barsigma} that $\tilde{q}_1,\tilde{q}_0$ are polynomial in $\l$ with
$\deg_\l\tilde{q}_1\leq3$, $\deg_\l\tilde{q}_0\leq 8$. We now want to show
that $\psi_0$ is also polynomial in $\l$.

In order to do this, it is convenient to carry out computations in the
nonholonomic $\l$-dependent frame $(W_i)_{i=0}^2$ rather than the holonomic
frame $(\p_{x^i})_{i=0}^2$ induced by local coordinates $(x^i)_{i=0}^2$ on
$M_u$. Let $(a_i^j)$ be the transition matrix between these frames and
$(b_i^j)$ be its inverse, i.e., $W_i=a_i^j\p_{x^j}$ and $\p_{x^i}=b_i^jW_j$
(summation convention). These vector fields induce vertical vector fields
$\mathbb{D}_{W_\tau}$ on jets via the formulae
$\mathbb{D}_{W_\tau}=b_{j_1}^{i_1}\cdots b_{j_t}^{i_t}\p_{u_{i_1\cdots i_t}}$,
where $\tau=(j_1,\ldots j_t)$. If $\xi$ is a function on the jet bundle (i.e.,
a differential operator) with order $t$ symbol $\xi_{(t)}$, then
$\mathbb{D}_{W_\tau}\xi$, with $|\tau|=t$, is the the coefficient $\xi_t^\tau$
of $\xi_{(t)}$ in the decomposition $\xi_{(t)}=\xi_t^{\rho} W_\rho$ (summation
over multi-indices $\rho$ with $|\rho|=t$).

Next, since the dual co-frame to $(W_0,W_1,W_2)$ is
$(\frac12\te_{\l\l},-\te_\l,\te)$, the coefficients on the right hand sides of
identities \eqref{barsigma} may be written
\[
\bar{c}_{01}^2=\te([W_0,W_1]),\ \ \bar{c}_{01}^1=-\te_\l([W_0,W_1]),\ \
\bar{c}_{01}^0=\tfrac12\te_{\l\l}([W_0,W_1]).
\]
This leads to
\begin{equation}\label{mudef}
\mu=\mathbb{D}_{W_1^{2m-1}W_2^{k-2m+2}}(\sigma)
=\mathbb{D}_{W_1^{2m-2}W_2^{k-2m+3}}(\psi)
=\te\bigl(\mathbb{D}_{W_1^{2m-2}W_2^{k-2m+2}}(W_0)\bigr).
\end{equation}
Identity \eqref{psisigma} yields $\te(\mathbb{D}_{W_1^rW_2^{k-r}}W_0)=0$
unless $r=2m-2$.  Note that by \eqref{psisigma} we have $\sigma_{(k+1)} =
\mu\,W_1^{2m-1}\odot W_2^{k-2m+2}\op{mod}W_0\odot S^kTM_u$, and this
decomposition can be refined:
\begin{align*}
\sigma_{(k+1)} &= \mu\, W_1^{2m-1}\odot W_2^{k-2m+2}
+\gamma\, W_0\odot W_1^{2m-2}\odot W_2^{k-2m+2}+\cdots\\
\psi_{(k+1)} &= \mu\, W_1^{2m-2}\odot W_2^{k-2m+3}
+\gamma\, W_1^{2m-1}\odot W_2^{k-2m+2}+\cdots,
\end{align*}
where $\gamma
=\mathbb{D}_{W_1^{2m-2}W_2^{k-2m+2}}(T)
=\te\bigl(\mathbb{D}_{W_0W_1^{2m-3}W_2^{k-2m+2}}(W_0)\bigr)
-\te\bigl(\mathbb{D}_{W_1^{2m-2}W_2^{k-2m+2}}(W_1)\bigr)$
and by dots we mean all terms with other $W_\tau$ that are irrelevant for the
computation.  Consequently,
\[
(\sigma_\l)_{(k+1)} = (2m-1)\mu\, W_1^{2m-2}\odot W_2^{k-2m+3}
+(\mu_\l+2\gamma)\, W_1^{2m-1}\odot W_2^{k-2m+2}+\cdots
\]
and since $\sigma_\l=(\te[W_1,W_0])_\l=\te_\l[W_1,W_0]+\te[W_2,W_0]
=\bar{c}_{01}^1-\te[W_0,W_2]$ we get
\[
(\bar{c}_{01}^1)_{(k+1)}=(\sigma_\l)_{(k+1)}-\mu\, W_1^{2m-2}\odot W_2^{k-2m+3}
\op{mod} W_0\odot S^kTM_u.
\]
Thus from \eqref{barsigma} we get the following expression for the $(k+1)$-symbol
\begin{align*}
(\tilde{q}_1)_{(k+1)}&=
-2(\sigma_\l)_{(k+1)}+5\mu\, W_1^{2m-2}\odot W_2^{k-2m+3}
+4\gamma\, W_1^{2m-1}\odot W_2^{k-2m+2}+\cdots\\
&=(7-4m)\mu\, W_1^{2m-2}\odot W_2^{k-2m+3}
-2\mu_\l\, W_1^{2m-1}\odot W_2^{k-2m+2}+\cdots
\end{align*}
Taking now $(k+1)$-symbol of~\eqref{psi0}, we get
\[
W_0\odot(\psi_0)_{(k)}+(\tilde{q}_1)_{(k+1)}\,\psi_0+\sigma_{(k+1)}\,(\psi_0)_\l
=(\tilde{q}_0)_{(k+1)}.
\]
Denoting $(\tilde{q}_0)_{(k+1)}=\kappa_0\, W_1^{2m-2}\odot W_2^{k-2m+3}
+\kappa_1\, W_1^{2m-1}\odot W_2^{k-2m+2}+\cdots$ and extracting the
coefficients at the indicated terms (which are unchanged by $\E$-equivalence)
we obtain the following system
\begin{align*}
(7-4m)\mu\psi_0&=\kappa_0,&
\mu\p_\l\psi_0-2\mu_\l\psi_0&=\kappa_1.
\end{align*}
All coefficients of this linear system on $\psi_0$ are polynomials in $\l$.
We assume $\mu\neq0$ (this condition will be discussed in the next section).
Then the first equation uniquely determines $\psi_0$. Moreover, $\mu$ divides
$\kappa_0$ as a polynomial in $\l$ because otherwise $\psi_0$ is a proper
rational function and then the second equation, written as
$(\psi_0\mu^{-2})_\l=\kappa_1\mu^{-3}$, yields a contradiction.

Thus $\psi_0$, and hence also $\psi$, are polynomials in $\l$, and
$\deg_\l\psi\leq5$.  Since the parameter $\l$ is manifestly projective, a
projective change should not destroy the polynomial property. Using the
special projective transformation $\l\mapsto\l^{-1}$ (or a similar projective
transformation arbitrarily close to the identity), we conclude that in fact
$\deg_\l\psi\leq3$.

Moreover, to satisfy smoothness in $\l$, the function $\psi$ should be
compatible with $\z$ in the sense that $\psi=m_1+\l m_2$, $\z=m_0+2\l m_1+\l^2
m_2$ for some $\l$-quadrics $m_i$.

\subsection{Nondegeneracy for scalar and vector equations}\label{Sec:33}

The $\l$-dependent quantity $\mu$ introduced in \eqref{mudef} characterizes
the extent to which the Lax integrability condition depends on the equation
$\E$. If this condition does not involve $F$ and its derivatives, the dLp is
trivial (holds off shell). We require that the equation shows on the level of
the top symbol, i.e., $(k+1)$-jet, or equivalently that $\mu\neq0$.  We first
observe that this condition is invariant under admissible transformations of
$\hat M_u$ as a $\PP^1$-bundle over $M_u$.

\begin{prop}\label{relinv}
The scalar quantity $\mu$ is a relative differential invariant, i.e.,
transforms by a nonvanishing scalar multiple under admissible transformations.
\end{prop}
\begin{proof}
The admissible transformations of $\hat M_u$ have the form
$(\bx,\l)\mapsto(\Phi(\bx),\Psi(\bx,\l))$, where $\Phi$ is a conformal
transformation of $(M_u,[g])$ and
$\Psi(\bx,\l)=\frac{a(\bx)+b(\bx)\l}{c(\bx)+d(\bx)\l}$ is a parametric
M\"obius transformation.  These preserve the algebraic behaviour of the dLp
$\hat{\Pi}$, and a straightforward computation shows they scale $\mu$ by a
nonvanishing scalar multiple.

Alternatively, using the framework and normalizations of Section~\ref{Sec:32},
$\sigma$ given by \eqref{sigma} is independent of the adapted frame up to
scale and the leading coefficient of its symbol
$\mathbb{D}_{W_1^{2m-1}W_2^{k-2m+2}}(\sigma)$ is a relative invariant, as
required.
\end{proof}

Let us now give the vector version, recalling first the set-up. In this case
$F\colon J^\ell(M,\V)\to\W$ is a determined (nonlinear) differential operator
of order $\ell$ on sections $\bu$ of a fibre bundle $\V$ over $M_u$ with
values in a rank $s$ vector bundle $\W$. We assume, for simplicity, that $\V$
is also a vector bundle of rank $s$ so that we can identify the vertical
bundle $T_\bu^{\rm v}\V$ along a section $\bu$ with $\V$.  Locally, in
coordinates, $F$ has components $F^i$ that are scalar differential operators
of order $\ell$ on vector-function $\bu=(u^j)$ of $\bx$, where
$i,j\in\{1,\ldots s\}$.

The symbol of $F$ at $\bu=(u^i)$ is a map $F_{(\ell)}\colon S^\ell
T^*M_\bu\ot\V\to\W$ that we identify with an $s\times s$ matrix $\bF=(F_i^j)$,
whose coefficients are polynomials of degree $\ell$ on $T^*M_\bu$. Similarly,
the symbol of a scalar differential operator $\varphi$ can be identified with
a column in components. The characteristic variety $\chv^\E$ of $\E$ is a
quadric if $\det(\bF)=Q^m$ as before.

The setup of the previous section extends, and
$\mu=\te\bigl(\mathbb{D}_{W_1^{2m-2}W_2^{k-2m+2}}(W_0)\bigr)$ is a section
of the bundle $\V$ over $M_u$ for solutions $u$ of the vector version of
\eqref{F-ell}. The following statement is proved similarly to Proposition
\ref{relinv}.

\begin{prop}
The section $\mu$ is a relative differential invariant, i.e.,
under admissible transformations it is mapped to another section 
related to $\mu$ by an automorphism of the bundle $\V$.
Hence the \textup(non-\textup)vanishing of $\mu$ is an invariant property.
\end{prop}

Note that $\mu$ depends not on a lift or dLp but only on the equation $\E:F=0$
itself.

 \begin{dfn}\label{Dnd}
In 3D the equation is called nondegenerate (and its dLp nontrivial) if the
relative invariant $\mu$ is nonzero (identically in $\l$).
 \end{dfn}

This condition is trivially satisfied if the conformal structure $c_F$ has
zero order in $u$, as happens in the dKP case. It can be proved for several
classes of PDE in 3D (with $\op{ord}_u(c_F)>0$), and we do not know of any
integrable equation violating this condition.

\begin{rk}\label{rem:MS-nondegen}
In fact, the Manakov--Santini equation, which by \cite{DFK} is the master
equation for EW geometry, is nondegenerate in the sense of this definition.
We check this for the modified version, with $W_0$ given by~\eqref{eq:W0MS}.
Since order of the conformal structure in this formalism is $k=0$, and also
$m=1$, we compute the symbol of $W_0$ by $(a,b)$ as $(\p_t,\p_y)$ and applying
$\te$ given by~\eqref{thla} we get $\mu=(\l,1)\neq0$.  Thus the MS equation is
nondegenerate and we adopt this condition for our main result.
\end{rk}

\subsection{Proof of Theorem~\ref{t:proj}}\label{Sec:34}
Any nondegenerate dLp $\hat\Pi$ is $\E$-equivalent to a normal dLp by
Proposition~\ref{p:normal}. When $d=4$, the normal dLp has the projective
property, while for $d=3$, the main task is to show that, up to
$\E$-equivalence, we may assume that $\psi$ in~\eqref{hatU} is cubic in
$\l$. The proof almost directly generalizes the scalar version of
Section~\ref{Sec:32}, so we only indicate important differences on each step.
\begin{numlist}
\item We begin with equation $\normeq=\Box F$ and, as before, by an off shell
  modification can arrange that $\op{ord}\psi\leq k+1$, where $k$ is the order
  of the conformal structure $c_F$.  Then its $(k+2)$-symbol and \eqref{eqK} yield
  the following matrix equation
\begin{equation}\label{matrix-psisigma}
   \begin{bmatrix} \psi^1 & \sigma^1 \\  \vdots & \vdots \\ \psi^s & \sigma^s
   \end{bmatrix}\odot
\begin{bmatrix} W_0 \\ -W_1 \end{bmatrix}=
\begin{bmatrix} F^1_1 & \cdots & F^1_s \\ \vdots & \ddots & \vdots \\
  F^s_1 & \cdots & F^s_s \end{bmatrix}
\odot \begin{bmatrix} R^1 \\ \cdots \\ R^s \end{bmatrix}
\end{equation}
where $\psi^i$ are components of the symbol $\psi_{(k+1)}$ and similarly for
$\sigma$, and where $R^i$ are the symbols of some operators in total
derivatives. Multiplying this equation by the adjugate matrix $\op{adj}(\bF)$
(which satisfies $\op{adj}(\bF)\bF=\det(\bF)I=Q^m I$) and denoting the rows of
the resulting left-hand side matrix $[\tilde\psi^i\ \tilde\sigma^i]$ we get
the equations
\[
W_0\odot\tilde\psi^i-W_1\odot\tilde\sigma^i=R_i\odot(W_0\odot W_2-W_1^2)^m,
\qquad i\in \{1,\ldots s\},
\]
from which we obtain a vector analogue of equation \eqref{psisigma} for each
component $i\in\{1,\ldots s\}$. Moreover, we can obtain normalization
analogues $R_i=(-1)^{m-1}\mu_i\, W_2^{k-2m+2}$ of \eqref{nR}.  This implies
that $\tilde\psi^i$ and hence $\psi^i$ can be chosen polynomial in $\l$,
moreover $\deg_\l\psi^i\leq3$.

\item Thus there exist a decomposition $\psi=\psi_1+\psi_0$, where $\psi_1$ is
  at most cubic in $\l$ and has $(k+1)$-symbol $(\psi^i)$ at $\bu=(u^j)$,
  while $\psi_0$ has order $\leq k$.  Substituting this into the constraint
  $\mathfrak{e}=\Box F$ we obtain a vector analogue of equation \eqref{psi0}.
  Taking its $(k+1)$-symbol and applying $\op{adj}(\bF)$ again gives
\[
\tilde{q}_1{\!}^i\psi_0+\tilde{\sigma}^i\p_\l\psi_0=\tilde{q}_0{\!}^i,\qquad
i\in\{1,\ldots s\}.
\]
If $\mu=(\mu_1,\ldots\mu_s)$ is nonzero (identically in $\l$), we conclude by
the same argument that $\psi_0$ is polynomial in $\l$ with
$\deg\psi_0\leq5$. Hence $\psi$ is a polynomial in $\l$ with $\deg\psi\leq5$.

\item Finally, if the coefficients of $\psi$ at $\l^4$ or $\l^5$ are nonzero,
  then a M\"obius transformation
  $\l\mapsto\frac{a(\bx)+b(\bx)\l}{c(\bx)+d(\bx)\l}$ arbitrary close to the
  identity maps the system of vector fields $\hat{W}$ and $\hat{U}=V_1+\l
  V_2+\psi\p_\l$ (after taking a proper linear combination and clearing
  denominators) to a system of the same form with a new $\psi$ of higher
  degree. Thus we must have $\deg_\l\psi\leq3$, which is a projectively invariant
  property.
\end{numlist}
In addition to $\hat{U}$ the vector field $\hat{W}-\l\hat{U}=V_0+\l
V_1+(\sigma-\l\psi)\p_\l$ have degree $\leq 3$ in $\l$. Indeed, under a change
of the adapted frame $(V_0,V_1,V_2)$ and a projective change of parameter $\l$
this field becomes of the form $\hat{U}$ and so the claim follows from
(i)--(iii).

Theorem \ref{t:proj} is now immediate. By Lemma \ref{lem3D} normal lifts with
this projective property are bijective with Weyl connections for $d=3$, while
for $d=4$ the normal lift is unique by Lemma \ref{lem4D}.  Thus for $d=3$ or
$d=4$, the standard Lax pair of $(M_u,c_F,\nabla)$ or $(M_u,c_F)$ is
$\E$-equivalent to $\hat\Pi$.  \qed

\section{Applications and generalizations}\label{Sec:4}

\subsection{Pseudopotentials}\label{Sec:42}

In this paper we have defined dispersionless integrable systems using a Lax
pair of vector fields. In 3D, an alternative approach relies instead on
pseudopotentials or nonlinear dispersionless Lax pairs, cf.~\cite{Za,OS,FK}.

\begin{dfn}
A \emph{pseudopotential} for a PDE $F=0$ is a function $S\colon M_u\to \R$
whose derivative $\dd S$ satisfies an overdetermined system of two equations that are compatible on shell, i.e., when $F(j^\ell u)=0$.
\end{dfn}

Locally, in coordinates $(x,y,t)$, we may write these equations as
$S_x=A(S_t)$ and $S_y=B(S_t)$ where $A$ and $B$ also depend on $(x,y,t)$. If
they depend on $(x,y,t)$ (only or also) through a section $v$ of a vector
bundle over $M_u$, and the integrability condition $\p_y(A(S_t))=\p_x(B(S_t))$
is required to hold identically in $S_t$, we obtain a PDE system on $v$.
Dispersionless integrable systems are often defined as those determined PDEs
arising in this way.

More invariantly, the two equations determine a codimension two (hence
$4$-dimensional) submanifold $N$ of the cotangent bundle $T^*M_u$ and $S$ is a
pseudopotential with respect to these equations if $\dd S$ takes values in
$N$. The integrability condition means that $N$ is coisotropic for the
canonical symplectic form $\Omega$ on $T^*M_u$. Here we recall that
$\Omega=\dd\tau$ where $\tau$ is the tautological $1$-form on $T^*M_u$ (with
$\beta^*\tau=\beta$ for any $1$-form $\beta$ on $M_u$). The coisotropic
condition means that the pullback of $\Omega$ to $N$ has rank two, hence a
$2$-dimensional radical (or kernel).

Locally $N$ is a fibre bundle over $M_u$ and we may take $\l=S_t$ as a fibre
coordinate in the above explicit formulation. Thus
\begin{align*}
\p_y(A(S_t)) &= A_y + A_\l S_{tx} = A_y + A_\l B_t + A_\l B_\l S_{tt},\\
\p_x(B(S_t)) &= B_x + A_\l S_{ty} = B_x + A_t B_\l + A_\l B_\l S_{tt},
\end{align*}
and so the integrability condition is
\begin{equation}\label{poisson}
A_y - B_x = \{A,B\}_P := A_t B_\l - A_\l B_t,
\end{equation}
where $\{A,B\}_P$ is the Poisson bracket of $A$ and $B$ with respect to the
Poisson structure $P:=\p_t\wedge\p_\l$; equivalently, the vector fields $\p_x
+ P(\dd A)$ and $\p_y + P(\dd B)$ commute, where $P(\dd A)= A_t \p_\l -
A_\l\p_t$ and $P(\dd B)=B_t \p_\l-B_\l\p_t$ are the hamiltonian vector fields
associated to $A$ and $B$ by the Poisson structure $P$ (cf. e.g.~\cite{DGS}).

Alternatively, if~\eqref{poisson} holds, then the pullback of $\Omega$ to $N$
is
\begin{multline*}
\dd\l \wedge \dd t + (A_\l \dd\l+ A_t\dd t) \wedge \dd x
+ (B_\l \dd\l+ B_t\dd t) \wedge \dd y - (A_t B_\l-A_\l B_t) \dd x\wedge \dd y\\
= (\dd\l - A_t \dd x - B_t \dd y)\wedge (\dd t + A_\l \dd x + B_\l \dd y)
\end{multline*}
and its radical is the dLp spanned by $\p_x + P(\dd A)$ and $\p_y + P(\dd B)$.

Conversely, let $\hat\Pi$ be a dLp on $\hat\pi\colon\hat M_u \to M_u$. On
shell, $\hat\Pi$ is integrable and so $\hat M$ fibres locally over a
minitwistor space $\tw$~\cite{Hi}. At least locally $\tw$ admits a
nondegenerate (and necessarily closed) $2$-form (such as $\dd z_1\wedge\dd
z_2$ in local coordinates); this then pulls back to a closed $2$-form $\omega$
on $\hat M_u$ with radical $\hat\Pi$. We may therefore write (locally, on
shell) $\omega=\dd\alpha$ for a $1$-form $\alpha$ on $\hat M_u$, which we may
assume vanishes on the fibres of $\hat M_u$ over $M_u$; hence we may write
$\alpha=(\Id,\tilde\alpha)\circ\hat\pi_*$ for a section $(\Id,\tilde\alpha)$
of $\hat\pi^*T^*M_u = \{(\hat p,\xi)\in \hat M_u\times T^*M_u\,|\, \xi\in
T^*_{\hat\pi(\hat p)} M_u\}$. Then $\tilde\alpha\colon \hat M_u\to T^*M_u$ is
an immersion whose image is coisotropic, since $\tilde\alpha^*\tau=\alpha$ and
so $\tilde\alpha^*\Omega=\dd\alpha=\omega$ has rank two with radical
$\hat\Pi$.

In order to do this off shell, we have to work modulo the PDE system.
However, the construction of the coisotropic immersion $\tilde\alpha$ from a
dLp requires integration, and so it may be necessary to pass to a covering
system.

\begin{ex}[dKP]
We illustrate this with the well-known example of the dKP equation~\eqref{dKP}
$(u_x + u u_t)_t = u_{yy}$ with dLp~\eqref{dKPdlp}. We must now find a
function $f$ so that $\omega=f\eta\wedge\te$ is closed modulo the equation,
and then a $1$-form $\alpha$ such that $\dd\alpha=\omega$ modulo the
equation. For the first step, it happens in this case that $f=1$ works. For
the second, setting
\[
\alpha= (\tfrac13\l^3 - u\l - v) \dd x + (\tfrac12\l^2 - u)\dd y + \l\,\dd t,
\]
we have that $\dd\alpha=\eta\wedge\te$ modulo the covering system $v_t=u_y$
and $v_y=u_x + u u_t$. Thus the pseudopotential system is
$S_x=\frac13 S_t^3 - u S_t - v$ and $S_y=\frac12 S_t^2 - u$.

Note that the above nonlocality (usage of $v$) may be avoided by using the
potential form $u_{xt}+u_t u_{tt}-u_{yy}=0$ of dKP. In this case the
pseudopotential $S$ is given by the equations: $S_x=\l^3/3-u_t\l-u_y$,
$S_y=\l^2/2-u_t$, $S_t=\l$. In both cases the parameter $\l$ is aligned to the
Lax pair in the sense that it is the projective parameter on the
correspondence bundle $\hat M_u\to M_u$. This is no longer so with
Manakov--Santini system \eqref{MS}.
\end{ex}

\begin{ex}[MS]
The MS system does admit a pseudopotential formulation; however it is neither
local in $u,v$ nor rational in $\l$. The system
\begin{gather}
\vphantom{\frac{a}{a}}
\hspace{-5pt} 
\sigma (R_x - P_t) = (u_t\l+u_y) (Q_x -P_y), \ \
\sigma (R_y - Q_t) = u_t (Q_x -P_y),\label{covMS1}\\
\hspace{-5pt} \sigma P_\l  = (\l ^2 +v_t\l -u + v_y) (Q_x -P_y), \
\sigma Q_\l  = (\l +v_t) (Q_x -P_y), \
\sigma R_\l=(Q_x -P_y)\label{covMS2}
\vphantom{\frac{a}{a}}
\end{gather}
with $\sigma=u_y\l +u u_t -u_t v_y +u_y v_t$, is a differential covering,
meaning it is compatible modulo MS. Here the last three equations \eqref{covMS2} determine
the behaviour in the spectral parameter $\l$, while the first two equations \eqref{covMS1} 
yield a pseudopotential $S$ via the system $S_x=P$, $S_y=Q$, $S_t=R$.  Indeed,
one can verify that the differential $\omega=\dd\alpha$ of the 1-form
$\alpha=P \dd x+Q \dd y+ R \dd t$ on $\hat M_u$ satisfies
$\hat{X}\inm\omega=\hat{Y}\inm\omega=0$ modulo MS and
\eqref{covMS1}--\eqref{covMS2}, where $\hat{X}=\tilde{X}|_{\tilde{\l}=\l}$,
$\hat{Y}=\tilde{Y}|_{\tilde{\l}=\l}$ in terms of formula \eqref{LP-MS} are
vector fields on $\hat M_u$ forming the Lax pair (with parameter $\l$
projective).
\end{ex}

\subsection{Twistor interpretation via contact coverings}

To relate the pseudopotential formulation more closely to the dLp formulation,
we focus on the first order quasilinear system for sections of
$\hat\pi\colon\hat M_u\to M_u$ which correspond to hypersurfaces in the
twistor space. We refer to this PDE system as a \emph{contact covering} of
$\E$ because the equation it defines is a codimension $2$ submanifold $\ccov$ of
$J^1\hat\pi$ (the bundle of $1$-jets of sections of $\hat\pi$), which is a
contact manifold.

This viewpoint gives an alternative way to understand why contact coverings
are equivalent to dLps. For this, let $\alpha$ be a contact form on
$J^1\hat\pi$ representing the standard contact structure and let
$\alpha_\ccov$ be its restriction on $\ccov$. Then for
$\omega_\ccov=\dd\alpha_\ccov$ we have: $\alpha_\ccov\we\omega_\ccov^{d-1}=0$
on shell, but $\alpha_\ccov\we\omega_\ccov^{d-2}\not\equiv0$, which implies
that $\alpha_\ccov$ has a $2$-dimensional radical $\hat{\Pi}\sub T\ccov$:
$\xi\inm\alpha_\ccov=0$ and $\xi\inm\omega_\ccov=0$ for all $\x\in\hat{\Pi}$.

If $\ccov$ is quasilinear, as we require in the definition of a contact
covering, then $\hat{\Pi}$ is projectible along the fibres
$\pi_{1,0}\colon\ccov\to J^0\hat{\pi}=\hat M_u$ and so it induces a pushforward
distribution of rank $2$ in $T\hat M_u$, which is a dLp in our formalism. This
is how a nonlinear covering induces a linear one, and the inverse relation is
given by a lift.

We summarize the observed relations into the following diagram, intertwining
the twistor and jet concepts:
\[
\xymatrix{
& (J^1\hat\pi)^{2d+1} \ar@{-->}[ld] \ar[dd]_{\pi_1} \ar[rd]^{\pi_{1,0}} &&
\ccov^{2d-1} \ar@{_{(}->}[ll] \ar@{.>}[ld] \ar[rd]^{\hat{\Pi}^2} \\
(T^*M_u)^{2d} \ar[rd] && J^0\hat{\pi}=\hat M_u^{d+1} \ar[ld]_{\PP^1} \ar[rd]^{\hat{\Pi}^2} &&
\PP(T^*\tw)^{2d-3} \ar[ld]_{\PP^{d-2}} \\
& M_u^d && \tw^{d-1}
}
\]
Here the dotted arrow is the restriction of the jet-projection to the contact
covering $\ccov$, arrows labelled by $\hat{\Pi}$ are (local) quotients by the
corresponding foliations, and $\tw^{d-1}$ is the mini-twistor or the twistor
space for $d=3$ or $d=4$ respectively.

The dashed arrow is well-defined locally, when a local coordinate (spectral
parameter) on the fibre $\hat M_u$ is chosen, but it may fail to exist
globally with respect to the spectral parameter $\l$ and locally with respect
to the dependent variable $u$. For $d=3$, this is precisely the theory of
pseudopotentials as discussed in Section~\ref{Sec:42}.  In this case, the
space $\PP(T^*\tw)$ is the real Penrose twistor space (projecting to the
Hitchin mini-twistor space with fibre $\PP^1$) that embeds into the complex
twistor space $\tw_\C$, which is the complexification of $\tw$ of the case
$d=4$, via a conformal Killing reduction~\cite{JT}.  We leave to the reader a
specification of relations between different real forms (signatures of the
conformal structure---related by Wick rotations in physics language).

When $d=4$ an analogue of the theory of pseudopotentials has been developed
in~\cite{Se}. Geometrically, this involves making a further projection to
$\PP(T^*M_u)$ on the left hand side of the above diagram (this is why the Lax 
pairs of \cite{Se} are homogeneous in $\p\psi$ for the covering function $\psi$).
This is a $7$-dimensional contact manifold, so two equations suffice to define 
a $5$-dimensional submanifold $\hat M_u$. In this formalism the Lax pair is
given by contact hamiltonian vector fields.

\subsection{Extensions of the theory}\label{S:5}

First, as noted in the introduction, in 2D, the theory of dispersionless Lax
pairs is vacuous, essentially because there is only one $2$-plane congruence.
However, if we relax the assumption that the Lax pair is transverse to the
fibres of $\hat M_u$ over $M_u$, this objection evaporates. The characteristic
condition means that at points of tangency, the projection of the Lax
distribution is a characteristic direction. In particular, when the
characteristic variety is a quadric (two points), we expect two points of
tangency, with the background given
by the spinor-vortex equations~\cite{C1}.

Secondly, it would be nice to relax the requirement that the PDE system
$F\colon J^\ell(M,\V)\to \W$ determined in the sense that
$\op{rank}(\W)=\op{rank}(\V)$. The theory in this paper should at least extend
to (formally) overdetermined systems ($\op{rank}(\W)\geq\op{rank}(\V)$) which
are compatible, so that the characteristic variety is a hypersurface. We would
then need to use the compatibility conditions to generalize Theorem~\ref{t:char}. 
 
For truly overdetermined systems, with characteristic variety of higher 
codimension, it would be necessary also to replace Lax pairs by Lax distributions 
of higher rank. Recently the characteristic property was confirmed in \cite{FK2} for paraconformal structures
generalizing EW structures to higher dimension, and we suggest that
it applies universally.

Finally, with the latter idea, the restriction to dimensions $d=3,4$ can be
relaxed. This would extend the framework of integrability via geometry to a wider context.

\subsection*{Acknowledgements}

We are grateful to Jenya Ferapontov for helpful discussions. In particular, he
pointed out that dispersionless Lax pairs for PDEs of Hirota type are always
characteristic, suggesting that this might be true more generally, and later
drew our attention to difficulties in establishing the projective property for
Lax pairs in 3D.


\end{document}